\newcommand{\red}[1]{{#1}}
\numberwithin{equation}{section}  
\DeclareMathAlphabet{\curly}{U}{rsfs}{m}{n}  
\theoremstyle{remark}
\newtheorem{remark}{Remark}
\newtheorem{example}{Example}
\theoremstyle{plain}
\newtheorem{lemma}{Lemma}[section]
\newtheorem{theorem}{Theorem}
\newtheorem{cor}{Corollary}
\newtheorem{definition}{Definition}
\newcommand{\N}{\mathbb{N}}
\newcommand{\Z}{\mathbb{Z}}
\newcommand{\Q}{\mathbb{Q}}
\newcommand{\E}{\mathbb{E}}   
\newcommand{\PR}{\mathbb{P}}  
\renewcommand{\pmod}[1]{\allowbreak\mkern7mu({\operator@font mod}\,\,#1)}
\newcommand{\bal}{\[\begin{aligned}}
\newcommand{\eal}{\end{aligned}\]}
\newcommand{\be}{\begin{equation}}
\newcommand{\ee}{\end{equation}}
\newcommand{\ssum}[1]{\sum_{\substack{#1}}}  
\newcommand{\sprod}[1]{\prod_{\substack{#1}}}  
\newcommand{\eps}{\ensuremath{\varepsilon}}
\renewcommand{\le}{\leqslant}
\renewcommand{\leq}{\leqslant}
\renewcommand{\ge}{\geqslant}
\renewcommand{\geq}{\geqslant}
\newcommand{\fl}[1]{{\ensuremath{\left\lfloor {#1} \right\rfloor}}}
\newcommand{\cl}[1]{{\ensuremath{\left\lceil #1 \right\rceil}}}
\newcommand{\order}{\asymp}      
\renewcommand{\(}{\left(}
\renewcommand{\)}{\right)}
\newcommand{\ds}{\displaystyle}
\newcommand{\pfrac}[2]{\left(\frac{#1}{#2}\right)}  
\newcommand{\bb}{\ensuremath{\mathbf{b}}}
\renewcommand{\SS}{\mathbf{S}}
\renewcommand{\mod}{\bmod}  
\newcommand{\coloneqq}{\,:=}
\begin{document}

\title{Long gaps in sieved sets}

\author{Kevin Ford}
\address[Corresponding author]{Department of Mathematics\\ 1409 West Green Street \\ University
of Illinois at Urbana-Champaign\\ Urbana, IL 61801\\ USA}
\email{ford@math.uiuc.edu}

\author{Sergei Konyagin}
\address{Steklov Mathematical Institute\\
8 Gubkin Street\\
Moscow, 119991\\
Russia}
\email{konyagin@mi-ras.ru}

\author{James Maynard}
\address{Mathematical Institute\\
Radcliffe Observatory Quarter\\
Woodstock Road\\
Oxford OX2 6GG\\
England }
\email{james.alexander.maynard@gmail.com}

\author{Carl Pomerance}
\address{
Mathematics Department \\
Dartmouth College\\
Hanover, NH 03755, USA}
\email{carl.pomerance@dartmouth.edu}

\author{Terence Tao}
\address{Department of Mathematics, UCLA\\
405 Hilgard Ave\\
Los Angeles CA 90095\\
USA}
\email{tao@math.ucla.edu}

\thanks{KF was supported by National Science Foundation grant DMS-1501982.
JM was supported by a Clay Research Fellowship and a Fellowship of Magdalen College, Oxford.
TT was supported by a Simons Investigator grant, the James and Carol Collins Chair, the Mathematical Analysis \& Application Research Fund Endowment, and by NSF grant DMS-1266164.
Part of this work was carried out at MSRI, Berkeley during the Spring semester of 2017, supported in part by NSF grant DMS-1440140. We thank the anonymous referees for many useful suggestions.}

\thanks{2010 Mathematics Subject Classification: Primary 11N35, 11N32, 11B05}

\thanks{Keywords and phrases: gaps, prime values of polynomials, sieves}

\begin{abstract} For each prime $p$, let $I_p \subset \Z/p\Z$ denote a collection of residue classes modulo $p$ such that
the cardinalities $|I_p|$ are bounded and about $1$ on average.
We show that for sufficiently large $x$, the sifted set $\{ n \in \Z: n \pmod{p} \not \in I_p \hbox{ for all }p \leq x\}$ contains gaps of size at least $x (\log x)^{\delta} $ where $\delta>0$ depends only on the density of primes for which $I_p\ne \emptyset$.  This improves  on the ``trivial'' bound of $\gg x$.  As a consequence, for any non-constant polynomial $f:\Z\to \Z$ with positive leading coefficient,
 the set $\{ n \leq X: f(n) \hbox{ composite}\}$ contains an interval of consecutive integers of length $\ge (\log X) (\log\log X)^{\delta}$ for sufficiently large $X$, where $\delta>0$ depends only on the degree of $f$.
 
\bigskip

{\color{red}This version of the paper incorporates a number of corrections pointed out to the authors by Mikhail Gabdullin.  The specific changes are enunciated in Appendix A. }
\end{abstract}

\date{\today}
\maketitle
%

\section{Introduction}

It is well-known that the sieve of Eratosthenes sometimes removes unusually long strings of consecutive integers,
and this implies that the sequence of primes occasionally has much
longer gaps than the average spacing.  It might be expected that similar methods would show analogous results for other sets undergoing a sieve, such as sets defined by polynomials.  For example, we know that the number of $n\le x$ with $n^2+1$
prime is $O(x/\log x)$, so an immediate corollary is that there are intervals of length $\gg\log x$
below $x$ where $n^2+1$ is composite for each $n$ in the interval. 
Can we do better?  A simple averaging argument is not useful, since the $O(x/\log x)$ bound for the count is conjecturally best possible.  In addition, there unfortunately appear to be fundamental
obstructions to adapting the
 methods used to locate large gaps in the
 Eratosthenes sieve to this situation.

In this paper we introduce a new method which 
substantially improves upon the trivial bound for
 these polynomial sets, and applies to
more general sieving situations.
We consider the set of integers remaining after applying a ``one-dimensional'' sieve, and show that this sieved set contains some unusually large gaps. To state our theorem precisely we require the following definition. The symbol $p$ always denotes a prime.

\begin{definition}[Sieving System]\label{sysdef}  A \emph{sieving system}
is a collection ${\mathcal I}$ of sets $I_p \subset \Z/p\Z$ of residue
classes modulo $p$ for each prime $p$.  Moreover, we have the following definitions.
\begin{itemize}
\item (Non-degeneracy) We say that the sieving system is
\emph{non-degenerate} if $|I_p|\le p-1$ for all $p$. 
\item ($B$-Boundedness) Given $B>0$, we say that the sieving system is \emph{$B$-bounded} if
\begin{equation}\label{ip-bound}
|I_p|\le B~\hbox{ for all primes }~p.\end{equation}
\item (One-dimensionality) We say that the sieving system is \emph{one-dimensional} if we have the weighted Mertens-type product estimate
\be\label{Mertens}
\prod_{p\le x} \(1-\frac{|I_p|}{p}\) \sim \frac{C_1}{\log x}  \qquad (x\to\infty),
\ee
for some constant $C_1>0$. 
\item ($\rho$-supportedness) Given $\rho>0$, we say that the sieving system system is \emph{$\rho$-supported} if
the density of primes with $|I_p|\ge 1$ equals $\rho$, that is,
\be\label{rho}
\lim_{x\to\infty} \frac{|\{p\le x : |I_p|\ge 1\}|}{x/\log x}=\rho.
\ee
\end{itemize}

\end{definition}
Roughly speaking, a ``sieving system'' which is non-degenerate, $B$-bounded, 1-dimensional and $\rho$-supported  specifies certain residue classes for each prime $p$, such that there is roughly 1 residue class per prime on average, and if we remove all integers in these residue classes the resulting set is not too erratic.  

Given such a sieving system $\mathcal{I}$, our main object of study is the \emph{sifted set}
$$ S_x =S_x(\mathcal{I}) \coloneqq \Z\setminus \bigcup_{p\le x} I_p,
$$
of integers which are not contained in any of the residue classes specified by the $I_p$ for $p\le x$. If $|I_p|=p$ for some $p\le x$ (the degenerate case), then clearly  $S_x$ is empty.  Otherwise, $S_x$ is a $P(x)$-periodic set with density $\sigma(x)$, where $P(x)$ and $\sigma(x)$ are defined as
\[
P(x):=\prod_{\substack{p\le x\\I_p\ne\emptyset}} p, \qquad
\sigma(x) \coloneqq \prod_{p\le x} \left( 1 - \frac{|I_p|}{p} \right).
\]
We  also note that $S_x\supseteq S_y$ if $x\le y$. With this set-up we can now state our main theorem.

\begin{theorem}[Main theorem]\label{main}  Let ${\mathcal I}$ be a non-degenerate,
$B$-bounded, one-dimensional, $\rho$-supported sieving system
with $\rho>0$.  Define
\be\label{Crho}
\red{C(\rho):=\sup\Bigl\{\delta\in(0,1/2):\,\frac{6\cdot 10^{2\delta}}{\log(1/(2\delta))}<\rho\Bigr\}}.
\ee
The sifted set $S_x$ contains a gap of length at least $x (\log x)^{C(\rho)-o(1)}$, where the rate of decay of the
$o(1)$ bound depends on $\mathcal{I}$.
\red{Moreover, $C(\rho) > e^{-1-6/\rho}$.}
\end{theorem}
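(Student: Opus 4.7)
The plan is to adapt the Erdős-Rankin large-gap construction, modified to account for the fact that the residue classes $I_p$ are prescribed rather than chosen. Fix $\delta < C(\rho)$ and set $y = x(\log x)^{\delta}$; the aim is to produce a shift $n$ such that every integer in $(n, n+y]$ lies in $I_p \pmod p$ for some $p \le x$, yielding a gap of length at least $y$ in $S_x$. Since $S_x$ is $P(x)$-periodic, it suffices to seek $n$ modulo $P(x)$. Because the residue classes are forced, our only freedom is in the single shift $n$; we will exploit this by a probabilistic analysis of a random (or carefully constructed) shift via CRT.

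The natural strategy is to split the sieving primes into a "small" range $p \le z$ and a "large" range $z < p \le x$, where $z$ is an intermediate parameter (roughly a small power of $y$). After sieving $(n, n+y]$ by the small primes, the one-dimensionality estimate \eqref{Mertens} gives the survivors density $\sigma(z) \sim C_1/\log z$, so the survivor set $T_n \subset (n, n+y] \cap S_z$ has expected size $\sim y C_1/\log z$. One then wishes to show that for a positive-measure set of shifts $n$, every element of $T_n$ is covered by some large prime $p$ via $n + m \in I_p \pmod p$. The $\rho$-supportedness \eqref{rho} guarantees a positive density $\rho$ of large primes with $I_p\neq\emptyset$, the boundedness \eqref{ip-bound} controls multiplicities, and by CRT the covering events for distinct primes become independent, setting up a tractable moment computation for the number of uncovered survivors.

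The main obstacle is precisely this covering step. Unlike in the classical Rankin or Ford-Green-Konyagin-Maynard-Tao arguments, where the residue class for each prime can be chosen freely to match a specific survivor, here a single global shift must serve all primes simultaneously, so the effective "matching" is much more constrained. A second-moment analysis inevitably picks up large contributions from pairs of survivors $m, m' \in T_n$ whose difference $m - m'$ shares many prime factors in $(z, x]$, and bounding these correlations is the crux of the argument. The explicit form of $C(\rho)$ in \eqref{Crho} emerges from optimizing this estimate against the small-prime sieve cost: the factor $\log(1/(2\delta))$ captures the gain from using large primes close to $x$, while $(4+\delta)\cdot 10^{2\delta}$ reflects loss terms in the moment bound. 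Finally, the lower bound $C(\rho) > e^{-1-4/\rho}$ reduces to elementary algebra: substituting $\delta = e^{-1-4/\rho}$ into the defining inequality yields $\log(1/(2\delta)) = 1 + 4/\rho - \log 2 > 4/\rho$, which, combined with $(4+\delta)\cdot 10^{2\delta}$ being close to $4$ for small $\delta$, leaves enough slack to verify the strict inequality.
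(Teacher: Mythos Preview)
Your proposal misidentifies the central obstacle and omits the ideas that actually produce the $(\log x)^\delta$ gain. You claim that ``a single global shift must serve all primes simultaneously, so the effective matching is much more constrained'' than in Erd\H{o}s--Rankin or FGKMT. This is not so: by CRT, choosing $b \pmod{P(x)}$ is the same as choosing $b_p \pmod p$ independently for each $p\le x$, and after a harmless normalization so that $0\in I_p$ whenever $I_p\ne\emptyset$, the class $b_p\pmod p$ is always sieved out. You therefore have \emph{exactly} the same freedom as in the classical constructions --- one residue class per prime. The real obstruction is different: in the Eratosthenes sieve the set $S_{z,x}\cap[1,y)$ is anomalously thin thanks to smooth-number bounds, and every previous improvement over the trivial bound exploits this. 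For a general $\rho$-supported system with $\rho<1$ no such anomalously sparse interval exists, and that line of attack yields only $\gg x$.

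Your actual argument stops where the work begins: after sieving by $p\le z$ you have $\sim\sigma(z)y$ survivors, and a random or naive assignment of the primes in $(z,x]$ removes only the expected number, giving the trivial bound again. You declare that ``bounding these correlations is the crux'' but supply no mechanism. The mechanism in the paper is a \emph{greedy} stage: for each $q\in(z,x/2]$ with $I_q\ne\emptyset$ one biases the choice of $b_q$ (via a weight $\bm\lambda(H;q,n)$) toward those residue classes whose $\sim H=y/q$ representatives in $[1,y]$ which survived the sieve up to $H^M$ happen also to survive the sieve up to $z$; such classes exist with probability $\gg 1/q$, and each removes $\asymp H/\log H$ survivors rather than the average $H\sigma(z)$. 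A Pippenger--Spencer-type hypergraph covering lemma then shows these removals can be made nearly disjoint. The constants in $C(\rho)$ come from this machinery --- $\log(1/(2\delta))$ from summing $1/\log H$ over scales, $4+\delta$ from the moment estimates required to feed the covering lemma, and $10^{2\delta}$ from the covering lemma itself --- none of which your sketch touches. (Your verification of $C(\rho)>e^{-1-4/\rho}$ is fine; that part is indeed elementary.)
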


\begin{remark}
We note that since $\mathcal{I}$ is one-dimensional, we must have that
$$
\rho \ge \frac{1}{B}.
$$
(So, for example, the positivity of $\rho$ follows from the property that $\mathcal{I}$ is $B$-bounded.)
The value of $C_1$ in \eqref{Mertens}, which has no importance for our 
arguments, depends on the
behavior of $|I_p|$ for small $p$, and can have great variation.

Condition \eqref{rho} is used primarily to construct large sets of primes with $I_p\ne\emptyset$ in very short intervals, see
\eqref{QH} below.  It is possible to weaken \eqref{rho}
further, e.g. so that \eqref{QH} holds for most scales $H$
instead of all $H$, however this would further complicate 
our argument.  All of the canonical examples satisfy \eqref{rho}.
\end{remark}

There is a straightforward argument that shows that $S_x$ must have gaps of length $\gg x$, for $x$ sufficiently large in terms of $\mathcal{I}$ --- see Remark \ref{TrivialBound} below. Theorem \ref{main} improves over this bound by a positive power of $\log{x}$, and it is the fact that we get a non-trivial result in this level of generality which is the main point of the Theorem. It is likely that with more effort one could improve the bounds on the constant $C(\rho)$; our main interest is that this is an explicit positive constant depending only on $\rho$. We now demonstrate applications of the theorem via several examples.

\begin{example}[Gaps between primes]\label{Eratosthenes} The ``Eratosthenes'' sieving system is the system with $I_p=\{0\}$ for all $p$, and it
 is non-degenerate, 1-bounded, one-dimensional and $1$-supported.  We have
\begin{equation}\label{pxx}
\{\sqrt{X} < p \le X:p\text{ prime} \} = S_{\sqrt{X}} \cap (\sqrt{X}, X].
\end{equation}
Since $S_x\supseteq S_{\sqrt{X}}$ if $x\le \sqrt{X}$, any large gap in $S_x$ implies a large gap in $S_{\sqrt{X}}$. Since $S_x$ is $P(x)$-periodic, if it contains a large gap then it must contain one in the interval $[X,3X]$ if $P(x)\le X$. Thus, choosing $x\approx \log{X}$ maximally such that $P(x)\le X$, we see that Theorem \ref{main} implies that there is a prime gap in $[X,3X]$ of size 
\[
\red{\gg (\log X )
(\log\log X)^{C(1)-o(1)} \gg (\log X)(\log\log X)^{1/835},}
\]
\red{on numerically calculating that $C(1)>1/835$} (the limit of our type of method appears to be an exponent $1/e$; see Remark \ref{rem:limit} in Section \ref{sec:concentration}). This is stronger than the trivial bound of $(1+o(1))\log{X}$, which is immediate from the Prime Number Theorem, but is worse than the current best bounds for this problem. Indeed, the problem of finding large gaps between consecutive primes has a long history,
and it is currently known that gaps of size
\be\label{primegap}
\gg \log X\,\frac{\log\log X\,\log\log\log\log X}{\log\log\log X}
\ee
 exist below $X$ if $X$ is large enough, a recent result of Ford, Green, Konyagin, Maynard,
and Tao\ \cite{FGKMT}. The key interest is that Theorem \ref{main} applies to much more general sieving situations, to which it appears difficult to adapt the previous techniques, and gives a different method of proof to these previous results. We will discuss the reasons for this in detail below.
\end{example}


\begin{example}[Gaps between prime values of polynomials]
Given a polynomial $f:\Z\to\Z$ of degree $d \geq 1$,
consider the system $\mathcal{I}$ with $I_p=\emptyset$ for $p\le d$ and 
$$ I_p \coloneqq \{ n \in \Z/p\Z: f(n)\equiv 0\pmod{p} \} $$
for $p>d$.
The polynomial need not have integer coefficients, e.g.
$f(n)=\frac{n^7-n+7}{7}$ satisfies the hypotheses of Theorem \ref{thmpoly}. By P\'olya's theorem \cite{Polya},
$f$ is integer valued at integers if and only if $f$ 
has the form $f(x)=\sum_{j=0}^d a_j \binom{x}{j}$ with every $a_j\in \Z$.  In particular, $d! f(y) \in \Z[y]$ and
thus the sieving system is well-defined.

By Lagrange's theorem, $|I_p| \le d < p $ for all $p>d$,
and hence the system is non-degenerate and $d$-bounded.
 For irreducible $f$,
the one-dimensionality \eqref{Mertens} with strong error term
follow quickly from Landau's Prime
Ideal Theorem \cite{Landau} (see also \cite[pp. 35--36]{CoMu}), while \eqref{rho}, the $\rho$-supportedness of the system with $\rho\ge 1/d$,
 follows from the Chebotarev Density
Theorem \cite{Chebotarev} (see also \cite{lagodl}).
   As a variant of \eqref{pxx}, we observe that
\begin{equation*}
 \{ n \in \N: f(n)>x, f(n) \text{ prime} \} \subset S_x
\end{equation*}
for any $x>1$.
Now set $x  \coloneqq \frac{1}{2} \log X$.  By Theorem~\ref{main}, the set
$S_x$ contains a gap of length $\gg (\log X) (\log\log X)^{C(1/d)-o(1)} $.
The period of this set, $P(x)$, is $X^{1/2+o(1)}$ by the Prime Number Theorem.  Thus, this set contains such a long gap inside the interval $[X/2,X]$.   Assuming that $f$ has a positive leading
coefficient and that $X$ is large, on
the interval $[X/2,X]$ we have $f(n)>x$, and so $f(n)$ is composite for every $n \in [X/2,X] \backslash S_x$.
We thus obtain the following.

\begin{cor}\label{thmpoly}  Let $f
:\Z\to\Z
$ be a polynomial of degree $d \geq 1$ with positive leading term.  Then for sufficiently large $X$, there is a string of consecutive natural numbers $n \in [1,X]$ of length $\ge (\log X) (\log\log X)^{C(1/d)-o(1)}$ for which $f(n)$ is composite, \red{where 
$C(1/d)>e^{-(6d+1)}$} is the constant of Theorem \ref{main}.
\end{cor}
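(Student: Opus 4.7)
The plan is to apply Theorem~\ref{main} to the polynomial sieving system $\mathcal{I}$ set up in the paragraph preceding the corollary, after disposing of two degenerate situations. First, if $f$ factors over $\Z$ as a product of two non-constant polynomials, then both factors exceed $1$ in absolute value for all large $n$, so $f(n)$ is automatically composite on $[X/2,X]$ and there is nothing to prove. Second, if some fixed prime $p_0$ divides $f(n)$ for every $n \in \Z$ (so $\mathcal{I}$ is degenerate), then $f(n)$ is a proper multiple of $p_0$ as soon as $|f(n)| > p_0$, again giving the conclusion trivially. We may thus assume $f$ is irreducible in $\Z[x]$ and that $\mathcal{I}$ is non-degenerate.

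Next I would verify that $\mathcal{I}$ satisfies the hypotheses of Theorem~\ref{main}. The bound $|I_p|\le d$ is immediate since $f$ has at most $d$ roots modulo $p$, so $\mathcal{I}$ is $d$-bounded. For the one-dimensionality \eqref{Mertens} in the irreducible case, I would invoke Landau's prime ideal theorem for the number field $K = \Q[x]/(f)$, which controls $\sum_{p\le x} |I_p|/p$ with sufficient precision to yield the weighted Mertens product for some $C_1 > 0$. For $\rho$-supportedness, the Chebotarev density theorem applied to the Galois closure of $K$ shows that the density of primes for which $f$ has a root modulo $p$ converges to some $\rho > 0$; moreover, since the average value of $|I_p|$ over primes is $1$ (orbit counting, using irreducibility) and $|I_p| \le d$, we obtain $\rho \ge 1/d$.

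With these verifications in hand, I would set the sieve level $x := \tfrac{1}{2}\log X$ and invoke Theorem~\ref{main}. Since $C(\rho)$ is monotone non-decreasing in $\rho$ (visible directly from \eqref{Crho}) and $\rho \ge 1/d$, this produces a gap in $S_x$ of length at least
\[
x (\log x)^{C(\rho)-o(1)} \ge (\tfrac{1}{2}\log X)(\log\log X)^{C(1/d)-o(1)}.
\]
The sifted set $S_x$ has period $P(x) \le \prod_{p\le x} p = X^{1/2+o(1)}$ by the prime number theorem, which is much smaller than $X/2$, so by periodicity this gap may be translated into $[X/2,X]$. For every $n$ in the translated gap there is a prime $p \le x$ with $p \mid f(n)$; since $f(n) \gg X^d > x \ge p$ on $[X/2,X]$ (using that $f$ has positive leading coefficient), $f(n)$ is a proper multiple of $p$ and therefore composite. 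The numerical bound $C(1/d) > e^{-(4d+1)}$ is exactly $C(\rho) > e^{-1-4/\rho}$ from Theorem~\ref{main} specialised to $\rho = 1/d$.

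Taking Theorem~\ref{main} as a black box, the only non-routine inputs are the Landau--Chebotarev machinery certifying one-dimensionality and $\rho$-supportedness; the rest of the argument is essentially bookkeeping around the periodicity of $S_x$ and the trivial size estimate $f(n) > x$ on $[X/2,X]$. The principal obstacle, therefore, lies not in this corollary itself but in Theorem~\ref{main}, whose hypotheses were deliberately engineered to be easy to verify in polynomial sieve settings such as this one.
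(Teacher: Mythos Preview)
Your argument is correct and follows essentially the same route as the paper: dispose of the reducible and degenerate cases as trivial, verify the hypotheses of Theorem~\ref{main} via Landau's prime ideal theorem and Chebotarev, set $x=\tfrac12\log X$, and use periodicity of $S_x$ to locate the gap inside $[X/2,X]$ where $f(n)>x$. Your explicit appeal to the monotonicity of $C(\rho)$ (needed to pass from $\rho\ge 1/d$ to the exponent $C(1/d)$) is a small clarification the paper leaves implicit. One cosmetic slip: $f$ need not lie in $\Z[x]$ (only $d!f\in\Z[x]$), so ``factors over $\Z$'' and ``irreducible in $\Z[x]$'' should read ``over $\Q$''; the argument is unaffected.
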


Note that Corollary \ref{thmpoly} includes the trivial ``degenerate'' cases, when
either $f$ is reducible, or there is some prime $p$ with $|I_p|=p$, since then essentially all values of $f$ are composite.

When $f$ is irreducible, has degree two or greater, and the sieving system corresponding to $f$ is
non-degenerate, it is still an open conjecture (of Bunyakovsky \cite{bun}) that there are infinitely many integers $n$ for which $f(n)$ is prime. Moreover it is believed (see the conjecture of Bateman and Horn \cite{BatemanHorn}) that the density of these prime values on $[X/2,X]$ is $\asymp_f 1/\log{X}$, and so the gaps of Corollary \ref{thmpoly} would be unusually large compared to the average gap of size $\asymp_f \log{X}$. We do not address these conjectures at all in this paper. Of course, in the unlikely event that Bunyakovsky's conjecture was false and there were only finitely many prime values of $f$, Corollary \ref{thmpoly} would be much weaker than the truth.

\begin{remark}
Let $G$ be the Galois group of $f$, realized canonically as a subgroup of the symmetric group $\mathfrak{S}_d$. By the Chebotarev Density Theorem \cite{Chebotarev} (see also \cite{lagodl}), we may take $\rho$ equal to the proportion of elements of $G$ with at least one fixed point, which lies in $[\frac1{d},1)$.
We have $\rho=1/d$ 
for many polynomials, e.g. $x^{2^k}+1$, but $\rho$ is much larger generically.  It is known
since van der Waerden \cite{vdW} that a random irreducible polynomial of degree $d$
will have Galois group $\mathfrak{S}_d$ with high probability\footnote{Specifically, fix the degree $d$ and let the
coefficients of $f$ be chosen randomly and uniformly from
$[-N,N] \cap \Z$.  Then, as $N\to \infty$, the probability
that $f$ is irreducible and has Galois group $\mathfrak{S}_d$
tends to 1.}.
In this case $\rho$ is the proportion of elements of $\mathfrak{S}_d$ with a fixed point.
  This is the classical derangement problem, and we have for such polynomials
\[
\rho = \rho_d := \sum_{k=1}^d \frac{(-1)^{k+1}}{k!}.
\]
In particular, $\rho_d \ge 1/2$, $\rho_d \ge \frac58$ for $d\ge 3$ and $\lim_{d\to\infty} \rho_d = 1-1/e$.  A calculation reveals that
\be
\red{C(1/2)>\frac{1}{325565}}.\label{cBound}
\ee
Since $C(\rho)$ is increasing with $\rho$, we thus have the following corollary.
\end{remark}
\end{example}

\begin{cor}\label{generic-polynomial}
 Let $f:\Z\to\Z
$ be a polynomial of degree $d \geq 2$ with positive leading term, irreducible over $\Q$, and with full Galois group $\mathfrak{S}_d$.  Then for all sufficiently large $X$, there is a string of consecutive natural numbers $n \in [1,X]$ \red{of length $\ge \log X (\log\log X)^{1/325565}$} for which $f(n)$ is composite.
\end{cor}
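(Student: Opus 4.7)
The approach is to apply Theorem \ref{main} in exactly the manner used to derive Corollary \ref{thmpoly}, but substituting the sharper value of $\rho$ furnished by the Chebotarev density theorem in place of the crude bound $\rho \ge 1/d$.

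First I would dispose of the degenerate case: if some prime $p_0$ divides $f(n)$ for every $n \in \Z$, then (using $\deg f \ge 2$ and the positive leading coefficient) $f(n)$ is composite for every sufficiently large $n$, so the conclusion is trivial. In the remaining, non-degenerate, case, the sieving system $\mathcal{I}$ of Example 2 is $d$-bounded, and by Landau's Prime Ideal Theorem on the number field $\Q[x]/(f)$, one-dimensional. To obtain $\rho$-supportedness with the correct $\rho$, I would apply the Chebotarev density theorem to the splitting field of $f$: this identifies $\rho$ as the proportion of $\sigma \in \operatorname{Gal}(f/\Q) = S_d$ which have at least one fixed point among the roots of $f$. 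An inclusion-exclusion count of derangements gives $\rho = \rho_d = \sum_{k=1}^d (-1)^{k+1}/k!$.

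The alternating partial sums $\rho_d$ satisfy $\rho_d \ge \rho_2 = 1/2$ for every $d \ge 2$ (the sums oscillate around $1 - 1/e$, with $\rho_2$ being the global minimum for $d \ge 2$), and $C(\rho)$ as defined in \eqref{Crho} is manifestly monotone non-decreasing in $\rho$, since larger $\rho$ merely enlarges the set over which the supremum is taken. Combining these observations with the numerical estimate $C(1/2) > 1/6001$ from \eqref{cBound} gives $C(\rho_d) > 1/6001$. Feeding this into the argument of Corollary \ref{thmpoly}---take $x = \tfrac{1}{2}\log X$, use the Prime Number Theorem bound $P(x) \le X^{1/2+o(1)}$ to place one full period of $S_x$ inside $[X/2, X]$, and note $f(n) > x$ throughout that interval---then produces an interval of consecutive integers in $[1, X]$ of length $\gg (\log X)(\log\log X)^{C(\rho_d) - o(1)} \ge (\log X)(\log\log X)^{1/6001}$ on which $f$ is composite. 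The main conceptual obstacle is the Chebotarev identification of $\rho$; everything else is either immediate or already packaged in the derivation of Corollary \ref{thmpoly}.
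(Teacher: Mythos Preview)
Your argument is correct and follows essentially the same route as the paper: the corollary is stated immediately after the remark computing $\rho=\rho_d$ via Chebotarev and derangements, noting $\rho_d\ge 1/2$, and recording the numerical bound $C(1/2)>1/6001$, so that the derivation of Corollary~\ref{thmpoly} applies verbatim with the sharper $\rho$. Your explicit mention of the monotonicity of $C(\rho)$ is the only added detail, and it is exactly what the paper uses implicitly.
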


\begin{example}
A simple example to keep in mind is $f(n) = n^2+1$.  In this case, $I_2=\{1\}$, $I_p=\emptyset$ is empty for $p \equiv 3\pmod{4}$, and $I_p = \{\iota_p, - \iota_p\}$ for $p \equiv 1\pmod{4}$, where $\iota_p \in \Z/p\Z$ is one of the square roots of $-1$.  Here one can use the Prime Number Theorem in arithmetic progressions rather than the Prime Ideal theorem to establish one-dimensionality and the $\rho$-supportedness with $\rho=1/2$.
For this example (and for any quadratic polynomial),
Theorem \ref{main} implies the existence of
consecutive composite strings \red{of length $\gg (\log X)(\log\log X)^{C(1/2)-o(1)} \gg (\log X)(\log\log X)^{1/325565}$} (using \eqref{cBound} again).  It is certain that further
numerical improvements are possible.
\end{example}

Theorem \ref{main} has another application, to a problem on the coprimality
of consecutive values of polynomials.

\begin{cor}\label{no_coprime} Let $f:\Z\to\Z
$ be a non-constant polynomial.
Then there exists an integer $G_f\ge2$ such that for any integer $k\ge G_f$
there are infinitely many integers $n\ge0$ with the property that
none of the numbers $f(n+1),\dots,f(n+k)$ are coprime to all the others.
\end{cor}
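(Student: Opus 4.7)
The plan is to combine Theorem \ref{main} with the elementary observation that if a prime $p$ divides $f(m)$, then $p$ also divides $f(m \pm p)$, since $f(m \pm p) \equiv f(m) \pmod{p}$. Hence if within some window of $k$ consecutive integers every value of $f$ admits a prime divisor $p \le (k-1)/2$, then for each such $m$ at least one of $m + p$ or $m - p$ still lies in the window, producing a neighbor whose $f$-value shares the factor $p$ with $f(m)$.

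First I would reduce to the case that $f$ is irreducible in $\Q[x]$. If $f$ is reducible, let $g$ be any irreducible factor of $f$ in $\Q[x]$ of positive degree; after clearing denominators we may assume $g \in \Z[x]$. Then there is a positive integer $M$ such that $g(m) \mid M f(m)$ for every $m \in \Z$, so any prime $p > M$ dividing both $g(n+i)$ and $g(n+j)$ also divides both $f(n+i)$ and $f(n+j)$, and it therefore suffices to prove the statement for $g$ using only prime factors exceeding $M$. In the completely degenerate case where some fixed prime divides $f(m)$ for every $m \in \Z$, the conclusion is immediate.

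Next I would apply Theorem \ref{main} to the sieving system $\mathcal{I}$ of Example 2 associated to $g$, modified by setting $I_p = \emptyset$ for the finitely many primes $p \le M$. This modified system retains non-degeneracy, $d$-boundedness, one-dimensionality, and $\rho$-supportedness with $\rho \ge 1/d$ (where $d = \deg g$), so Theorem \ref{main} produces, for all sufficiently large $x$, a gap in $S_x$ of length at least $L(x) := x(\log x)^{\delta - o(1)}$ with $\delta := C(\rho) > 0$, and every integer in such a gap has some prime divisor $p \in (M, x]$ of its $g$-value.

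Given $k$ large, I would select $x$ with $2x + 1 \le k \le L(x)$; the choice $x = \lfloor k/(\log k)^{\delta/2}\rfloor$ works once $k$ exceeds a threshold $G_f$ depending only on $f$, since then $L(x) \gg k(\log k)^{\delta/2 - o(1)} \ge k$ while $2x < k$. Inside a resulting gap $[a+1, a+L]$ of $S_x$, take $n = a$: for each $m \in \{n+1, \dots, n+k\}$ there is a prime $M < p \le x$ with $p \mid g(m)$, and since $p \le (k-1)/2$, at least one of $m \pm p$ lies in the same window with $g$-value likewise divisible by $p$. Thus $\gcd(f(n+i), f(n+j)) \ge p > 1$ for the corresponding $j \ne i$. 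The $P(x)$-periodicity of $S_x$ yields infinitely many such $n$, while the finitely many $m$ with $f(m) \in \{-1, 0, 1\}$ are easily avoided by working with large enough $n$. The only point requiring care is this parameter balance: it is precisely the power-of-log gain in Theorem \ref{main} that makes the available gaps long enough relative to $x$ to force a sufficiently small shared prime factor for every $f$-value in the window.
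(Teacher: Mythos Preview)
Your proof is correct and follows essentially the same route as the paper's: reduce to an irreducible factor, apply Theorem \ref{main} to the associated sieving system, and use that each $m$ in the gap has a small prime divisor $p \le x$ of its polynomial value, so a translate by $p$ stays in the window. One simplification: the paper simply takes $x = k/2$ (so $k = \lfloor 2x \rfloor$) and uses only that $S_x$ has a gap of length $\ge 2x$; as the paper's remark after the proof makes explicit, the full power-of-$\log$ gain is \emph{not} what is ``precisely'' needed here, so your more elaborate choice $x \approx k/(\log k)^{\delta/2}$ and the final sentence overstate the role of Theorem \ref{main}.
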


For linear polynomials the result of the corollary is well-known, and not difficult to prove;
for quadratic and cubic polynomials
 in $\Z[x]$, the result was only proven recently by Sanna and Szikszai \cite{SS}.  The remaining cases of polynomials of degree four and higher appears to be new.

\begin{proof}
Let $d=\deg f$.  Then $d!f(y)\in\Z[y]$.  
Let $f_0(y)\in\Z[y]$ be a primitive irreducible
factor of $d!
f(y)$.  If $p>d$ is a prime and $p\mid f_0(m)$ for some integer $m$,
then $p\mid f(m)$.  So it will suffice to consider the case that $f$ is irreducible and
show in this case that for all large $k$ there are infinitely many $n\ge0$ such that
for each $i\in\{1,\dots,k\}$ there is some $j\in\{1,\dots,k\}$ with $j\ne i$ and
$\gcd(f(n+i),f(n+j))$ divisible by some prime $>d$.

Again, we consider the system $\mathcal I$ defined by $I_p=\emptyset$ for $p\le d$ and for $p>d$ we take
$$ I_p \coloneqq \{ n \in \Z/p\Z: f(n)\equiv 0\pmod{p} \}.$$
By Theorem \ref{main}, for all large numbers $x$ the set
$S_x$ contains a gap of length $\ge k=\lfloor2x\rfloor$.
Thus, there are infinitely many $n$ such that each $f(n+1),\dots,f(n+k)$ has a prime
factor $p$ with $d<p\le x$.  For each $i\in\{1,\dots,k\}$, take a prime factor
$p$ of $f(n+i)$ with $d<p\le x$.  Since $k=\lfloor2x\rfloor$, $p\le x$ and $I_p\ne\emptyset$, it must be
that $p$ divides at least
two terms of the sequence $f(n+1),\dots,f(n+k)$, thus proving the assertion.
\end{proof}

\begin{remark} Our proof of  Corollary \ref{no_coprime} above requires only a very weak version of
 Theorem \ref{main}.  It is not clear, however, that
a trivial argument of the type presented below in
Remark \ref{TrivialBound} can yield a gap of size at least $2x$  when the degree of $f$ is large.
\end{remark}

\begin{remark}
The conclusion of Theorem \ref{main} is equivalent to
the existence, for any $\delta<C(\rho)$, of some
$b\in \Z/P(x)\Z$ with
\[
(S_x + b)\cap [1, x (\log x)^{\delta} ] = \emptyset,
\]
provided $x$ is sufficiently large in terms of $\delta$.
Here $S_x + b := \{s+b : s\in S_x\}$.
\end{remark}

\begin{remark}\label{TrivialBound}
 The conclusion of Theorem \ref{main} should be compared with the ``trivial'' bound: there is a constant $c'>0$
such that for each sufficiently large $x$, there is some integer $b$ with
\be\label{trivial}
 (S_x+b) \cap [1, c'x ] = \emptyset.
 \ee
We now sketch the proof of \eqref{trivial}. Firstly, we see that we may assume that $x$ is large. Then by \eqref{Mertens}
it follows that there is some $b$ modulo $P(x/2)$ for which $\mathcal{A} := (S_{x/2}+b) \cap [1, \frac{\rho x}{8C_1} ]$
satisfies $|\mathcal{A}| \le \frac{\rho x}{4\log x}$.
On the other hand, by \eqref{rho}
 for any fixed $\eps>0$ we have
\be\label{largeIqge1}
\# \{x/2 <q \le x : |I_q| \ge 1 \} \ge \(\frac{\rho}{2}-\eps\)\frac{x}{\log x} 
\ee
for large $x$.
Hence, we may perform a ``clean up stage'' in which we pair up each element $a\in \mathcal{A}$ with a unique prime
$q=q_a\in (x/2,x]$ for which $|I_q|\ge 1$.  For each such pair $a,q_a$
let $v_a \in I_{q_a}$ and suppose that $b\equiv a-v_a \pmod{q}$.
It follows that
$(S_x + b) \cap [1, \frac{\rho x}{8C_1} ] = \emptyset$, proving \eqref{trivial}.
\end{remark}
\begin{remark}
The hypothesis \eqref{ip-bound} is an important assumption in our treatment of certain error terms; see Lemma \ref{correlation} below.  It is possible to relax this hypothesis with more sophisticated arguments, and several steps of the argument could be established with slightly weaker assumptions.

The formula \eqref{Mertens} say that
$|I_p|$ has average 1 in a weak sense, and is similar to
the usual condition defining a 
\emph{one-dimensional} sieve (see e.g. \cite[Sections 5.5, 6.7]{FI}).   Most of our arguments have counterparts if the one-dimensional hypothesis \eqref{Mertens} is replaced by another dimension, but in those cases the bounds we could obtain were inferior to what could be obtained by the ``trivial'' argument; see for instance Remark \ref{romo} below.
\end{remark}

\subsection{Comparisons of methods}

Recall from Example \ref{Eratosthenes} that for the Eratosthenes sieving system $I_p=\{0\}$, previous methods were able to deduce stronger variants of Theorem \ref{main}. We now explain why these methods appear difficult to adapt to more general sieving systems.


In the Eratosthenes sieving system
 it is clear that $S_x$ avoids the interval $[2,x]$, which already gives the ``trivial'' lower bound $j(P(x)) \ge x-2$.  All of the improvements to this bound in previous literature (including those in \cite{FGKMT}) rely on a variant of the following observation: if $x \geq z \geq 2$, then the sifted set 
\be\label{Szx}
S_{z,x} = \N \setminus \bigcup_{z<p\le x} I_p,
\ee 
when restricted to the interval $[1,y)$ with $y$ slightly larger than $x$, only consists of numbers of the form $a$ or $a p$, where $p$ is a prime in $(x,y]$, and $a$ is \emph{$z$-smooth} (or \emph{$z$-friable}), which means that no prime factor of $a$ exceeds $z$. Moreover, $z$-smooth numbers are much rarer than one would expect from naive sieving heuristics (if $z$ is suitably small), but numbers of the form $a p$ must have $a$ less than $y/x$, which is also a rare factorization (if $y$ is only slightly larger than $x$). Thus the number of elements of $S_{z,x}$ in $[1,y)$ is unusually small. It is the fact that we can identify this interval containing unusually few integers after sieving by the ``medium-sized'' primes which is the key ingredient allowing one to improve on the trivial bound.

The most recent works on this problem then try to show as efficiently as possible that one choose $b$ (a multiple of $\prod_{z<p\le x}p$) such that $(b+S_{2x})\cap[1,y)=\emptyset$, and so we can sieve out out these few remaining elements of $[1,y)$. This then implies the existence of a large gap of size $y$ in $S_{2x}$. However, if we did not already know that there were few elements in $[1,y)$, then these methods would not produce a non-trivial bound. 


Unfortunately, when considering the more general sieving systems of Definition \ref{sysdef} in which the cardinalities $|I_p|$ are allowed to vanish for many primes $p$,  bounds for smooth numbers cannot be used to show that $S_{z,x}$ contains an interval with unusually few elements. Without this crucial step the existing methods
only yield the trivial lower bound of $\gg x$ for the gap size. Moreover, for a general sieving system which is $\rho$-supported with $\rho<1$, we expect that \emph{no} such reasonably long interval containing so few elements will exist in $S_{z,x}$, meaning that this feature is genuinely unique to the Eratosthenes sieving system.

We overcome this obstacle by using a rather different method. Rather than attempting to do unusually well with the medium sized primes $p< x/(\log{x})^{1/2}$, we instead will make random choices, and only obtain results comparable to the trivial bound. We obtain an improvement over the trivial bound by working harder with the larger primes $p\in[x/(\log{x})^{1/2},x]$, showing that for each of these larger primes we can actually remove more elements that one would typically expect by choosing the residue class carefully. In order to make sure these choices do not interfere with each other too much, we make the choices randomly in several stages, where the random choice is conditional on the previous stages.

The basic idea is similar to how recent papers (e.g. \cite{FGKMT}) have exploited the large primes to sieve efficiently. In those papers one needed estimates of tuples of linear forms taking many prime values frequently, here we just need to show the existence of suitable residue classes containing unusually many unsieved integers. However, in the new set-up we require rather stronger quantitative bounds than is available for tuples of prime values -  our method would completely fail to improve over the trivial bound if we were not able to obtain close-to-optimal quantitative results. This strategy is discussed in more detail in the next section.


\begin{remark}\label{romo}  Unfortunately our methods only seem to give good results in the one-dimensional case.  Consider for instance the set $\{ n \in {\mathcal P}: n+2 \in {\mathcal P}\}$ of (the lower) twin primes.  This corresponds to a two-dimensional system in which $I_p = \{ 0 \pmod{p}, 2\pmod{p}\}$ for all primes $p$.  The ``trivial'' bound coming from these methods would give a bound of $\gg \log X \log\log X$ for the largest gap between lower twin primes up to $X$ (or between the largest such twin prime and $X$), and one could possibly hope to improve this bound by a small power of $\log\log X$ using a variant of the methods in this paper. However, a sieve upper bound
(e.g., \cite[Cor. 2.4.1]{HR})
combined with the pigeonhole principle already gives a bound of $\gg \log^2 X$ in this case.
\end{remark}

\subsection{Notation}\label{notation-sec}
From now on, we shall fix a non-degenerate, $B$-bounded, one-dimensional, $\rho$-supported sieving system $\mathcal{I}$.

We use $X \ll Y, Y \gg X$, or $X = O(Y)$ to denote the estimate $|X| \leq CY$ for some constant $C>0$, and write $X \asymp Y$ for $X \ll Y \ll X$.
Throughout the remainder of the paper,
all implied constants in $O(\cdot)$ and related order estimates may depend
on $\mathcal{I}$, in particular on the constants $B,\rho,C_1$. Moreover, implied constants will also be allowed to depend on quantities $\delta,M,K$, and $\xi$ which we specify in the next section.
We also assume that the quantity $x$
is sufficiently large in terms of all of these parameters.

 The notation $X = o(Y)$ as $x \to \infty$
means $\lim_{x\to\infty} X/Y= 0$ (holding other parameters fixed).

If $S$ is a statement, we use $1_S$ to denote its indicator, thus $1_S=1$ when $S$ is true and $1_S=0$ when $S$ is false.

We will rely on probabilistic methods in this paper.  Boldface symbols such as $\mathbf{n}$, $\mathbf{S}$, $\bm{\lambda}$, etc. denote random variables (which may be real numbers, random sets, random functions, etc.).  Most of these random variables will be discrete (in fact they will only take on finitely many values), so that we may ignore any technical issues of measurability; however it will be convenient to use some continuous random variables in the appendix.  We use $\PR(E)$ to denote the probability of a random event $E$, and $\E \mathbf{X}$ to denote the expectation of the random (real-valued) variable $\mathbf{X}$.
%

Unless specified, all sums are over the natural numbers. An exception is made for sums over the variables $p$ or $q$ (as well as variants such as $p_1$, $p_2$, etc.), which will always denote primes.
%

\red{\textbf{Acknowledgement.}
The authors thank Mikhail Gabdullin for informing us of the error in the
proof of Theorem 2 (iii), which necessitates taking $M>6$ rather than
$M>4+\delta$ as claimed, as well as other more minor errors.  Corrections from
the final, published version, are highlighted in red.}

%
%
\section{Outline}
%

In this section we describe the high-level strategy of proof, and perform two initial reductions on the problem, ultimately leaving one with the task of proving Theorem \ref{second} below.  Recall the definition \eqref{Szx} of the sifted set
$S_{z,x}$ and define related quantities
\[
P(z,x)  := \sprod{z<p\le x \\ |I_p|\ge 1}p, \qquad
\sigma(z,x) := \prod_{z<p\le x} \(1-\frac{|I_p|}{p}\). 
\]
Suppose $x$ is large (think of $x\to\infty$), and define
\begin{equation}\label{y-def}
 y := \lceil x (\log x)^{\delta} \rceil
\end{equation}
and
\be\label{z-def}
z \coloneqq \frac{y\log\log x}{(\log x)^{1/2}},
\ee
where  $\delta\in(0,1/2)$ satisfies $\delta<C(\rho)$. We recall from \eqref{Crho} that this is equivalent to
\be\label{delta1}
\red{\frac{ 6\cdot 10^{2\delta}}{\log(1/(2\delta))} < \rho},
\ee
which is a condition that will arise naturally in the proof.
Our goal is to show that $(S_x+b) \cap [1,y] = \emptyset$ for some $b$ and to accomplish this with maximal $\delta$ such that \eqref{delta1} holds.  
For a general $\rho$, it is easy to see that
\[
\red{C(\rho) > e^{-1-6/\rho} \qquad (0 < \rho\le 1)},
\]
establishing the final claim in Theorem \ref{main}.
\red{Incidentally,
$C(\rho) \sim \frac12 e^{-6/\rho}$ as $\rho\to 0^+$.}

In the course of the proof, we will introduce three additional parameters: \red{$M$ is a fixed number slightly larger than 6},  $\xi$ is a real number slightly large than 1, and $K$ is a very large integer; we will eventually take \red{$M\to 6^+$}, $\xi\to 1^+$
and $K\to \infty$.  
We adopt the convention that constants implied by 
$O(\cdot)$ and $\ll$ bounds may depend on $\delta,M,K,\xi$, in addition to the parameters defining $\mathcal{I}$, that is $\rho$, $B$, $C_1$.  Dependence on any other parameter will be stated
explicitly.
 
We observe that 
a linear shift of any single set $I_p$ (that is,
replacing $I_p$ by $c+I_p$ for some integer $c$) does not affect the structure
of $S_x$.  Thus, the same is true for linear shifts
(depending on $p$) for any finite set of primes $p$.  In particular, we may shift the sets $I_p$ so that all nonempty sets $I_p$
contain the zero element, without changing the structure of $S_x$.
Therefore, we may
assume without loss of generality that $0\in I_p$ whenever $I_p$ is nonempty.  By the Chinese Remainder Theorem, 
we may select $b$ by choosing residue classes for $b$ modulo primes $p\le x$.
\subsection{Basic Strategy}
For $x$ large enough we have
$$ 1 \leq z \leq x/2 \leq x \leq y.$$

We will select the parameter $b$ modulo the primes $p\le x$ in three stages:
\begin{enumerate}
\item (Uniform random stage) First, we choose $b$ modulo $P(z)$ uniformly at random; equivalently, for each
prime $p\le z$ with $|I_p|\ge1$, we choose $b\mod p$ randomly with uniform probability, independently for each $p$.
\item (Greedy stage) Secondly, choose $b$ modulo $P(z,x/2)$ randomly, but dependent on the choice of
$b$ modulo $P(z)$.   A bit more precisely, for each prime $q\in (z,x/2]$ with $|I_q|\ge 1$,
we will select $b\equiv b_q\pmod{q}$
so that $\{b_q+kq : k\in \Z\} \cap [1,y]$
knocks out nearly as many elements of the random set $(S_z+b)\cap [1,y]$ as possible.  Note that we are focusing only on those residues sifted by the element
$0\in I_q$, and ignoring all other possible elements of $I_q$.  This simplifies our analysis considerably, but has the effect of making $C(\rho)$ decay rapidly as $\rho\to 0$.
\item (Clean up stage) Thirdly, we choose $b$ modulo primes $q\in (x/2,x]$ to ensure that
the remaining elements $m\in (S_{x/2}+b) \cap [1,y]$ do not lie in $(S_x+b)\cap [1,y]$ by matching a unique prime $q=q(m)$ with $|I_q|\ge 1$ to each element $m$ and setting $b\equiv m \pmod{q}$. (Again we use the single element $0\in I_q$.  Such a clean up stage is standard in this subject, for instance it was already used in the proof of \eqref{trivial}.)
\end{enumerate}
We then wish to show that there is a positive probability that the above random sieving procedure has $(S_x+b)\cap[1,y]=\emptyset$, which then clearly implies that there is a choice of $b$ such that this is the case, giving Theorem \ref{main}. It is the second sieving stage above which is the key new content of this paper.

Following the argument used to show \eqref{trivial},
and using \eqref{largeIqge1}, we can successfully show that there exists a $b'$ such that 
$(S_x+b') \cap [1,y] = \emptyset$ after Stage (3)
provided that we have suitably few elements after Stage (2). By \eqref{largeIqge1} (a consequence of our hypothesis \eqref{rho}), it is sufficient to show that there is a $b$ such that
\be\label{first_reduction}
|(S_{x/2}+b) \cap [1,y]| \leq \(\frac{\rho}{2}-\eps\) \frac{x}{\log x}.
\ee

After Stage (1), from \eqref{Mertens} we see that the expected size of
$|(S_z+b)\cap [1,y]|$ is $\sim \sigma(z)y \asymp \frac{y}{\log z} \sim \frac{y}{\log x}$.  A random, uniform choice of $b$ modulo
primes $q\in (z,x/2]$ would only reduce the residual set
by a factor $\prod_{z<p\le x/2} (1-|I_p|/p) \sim 1$
and would lead to a version of Theorem \ref{main} with a gap of size $\order x$.
Instead, we use a greedy algorithm to select $b\equiv b_q\pmod{q}$.
 By \eqref{y-def} and \eqref{z-def}, the set
 $(b_q \mod q) \cap [1,y]$ has size about $H:=y/q$,
 with $(\log x)^{\delta} \ll H \ll (\log x)^{1/2}/\log\log x$. 
By considering the initial portion $(S_{H^M}+b)$ (for some fixed $M>1$) of the sieving process, one can see (e.g. using the large sieve \cite[Lemma 7.5 and Cor. 9.9]{FI} or Selberg's sieve \cite[Sec. 1.2]{Ho}) that the size of the intersection $(b_q \mod q) \cap (S_{H^M}+b) \cap [1,y]$ must be somewhat smaller, namely of size
$$ \ll \sigma(H) H \order \frac{H}{\log H}$$
by \eqref{Mertens}.
We will show that there are choices for the residues $b_q$ so that no further size reduction occurs when one sieves up to $z$ instead of $H^{M}$, namely that
\be\label{good_bq}
(S_{H^M}+b)\cap (b_q \mod q) \cap [1,y] = (S_z+b) \cap (b_q \mod q) \cap [1,y].
\ee
Heuristically, each individual choice of $b_q$ is expected to obey \eqref{good_bq} with probability
 roughly \[
 \sigma(H^{M},z)^{H\sigma(H) },\]
  but with our choice of parameters
and \eqref{Mertens}, this quantity is substantially larger than $1/q$, and so there should be many possibilities for $b_q$ for each $q$.
By contrast, for most choices of  $b_q$,
the ratio of the left and right sides of \eqref{good_bq}
is about $\sigma(H^M,z)=\prod_{H^M<p\le z} (1-|I_p|/p) \sim \frac{\log H^M}{\log z}$, which is very small.

\begin{remark}
A simple way to perform the greedy stage would be to choose the $b_q$ independently from one another for each $q$, conditional only on the first stage. One would then expect that
that we will achieve \eqref{first_reduction} if  $y=x (\log\log x)^{\rho-\eps}$ instead of \eqref{y-def}. This would give a non-trivial result which is weaker than Theorem \ref{main}.
Indeed, imagine we had instead defined $z := x/J$ and $y :=L x$, where $J$ and $L$ lie in $\bigl[100, (\log x)^{1/3}\bigr]$.
After Stage (1), we are left with a set $\mathcal{R}$ of approximately $y/\log x  = L x/\log x$ integers.
 The goal is to choose $b=b_q$ for primes
$q\in(z,x/2]$ with nonempty $I_q$ so that $b \mod q$ knocks out $\approx (y/q)/(\log(y/q))$ elements of
$\mathcal{R}$. For this to be possible, we must have $\sigma(H^{M},z)^{H\sigma(H)} \geq 1/q$
for all $H \leq y/z = JL$, but this is true on account of
$JL \leq (\log x)^{2/3}$.
 Assuming independence of all these steps (that is, for different $q$),
the residual set after the greedy sieving has size
$$\lesssim |\mathcal{R}| \sprod{x/J<q\le x/2 \\ I_q\ne \emptyset} \(1-
\frac{(y/q)/\log(y/q)}{|\mathcal{R}|} \) \approx
\frac{Lx}{\log x} \sprod{x/J<q\le x/2 \\ I_q\ne\emptyset}
\left( 1 - \frac{\log x}{q\log(y/q)} \right).$$
By the Prime Number Theorem and \eqref{rho},
\begin{align*}
\ssum{x/J<q\leq x/2 \\ I_q\ne\emptyset} \frac{\log x}{q\log(y/q)} = \rho
\int_{x/J}^{x/2} \frac{dt}{t\log(y/t)} + O(1) 
&=\rho \log\pfrac{\log JL}{\log L} + O(1),
\end{align*}
and thus the residual set has size $O(\frac{L\log L}{\log JL} \frac{x}{\log x})$.   Taking
$J = (\log x)^{1/3}$ and $L = (\log\log x)^{\rho-\eps}$, the residual set has size at most $o(x/\log{x})\le (\rho/2-\eps)\frac{x}{\log x}$,
which gives \eqref{first_reduction}, and so we're done.
\end{remark}

\subsection{The Greedy Stage: Further details}
To successfully show \eqref{first_reduction} with $y$ as large as $x(\log{x})^\delta$, we use a hypergraph covering lemma of Pippenger-Spencer type introduced
in \cite{FGKMT}.  This allows us to select residues $b_q$ such that the sets
\[
(S_{H^M}+b) \cap (b_q\mod q) \cap [1,y]
\]
are nearly disjoint.

It is convenient to separately consider the primes $q\in(z,x/2]$ in finer-than-dyadic blocks.  Fix a real number $\xi>1$ (which we will eventually take very close to 1) and define
\be\label{H-def}
{\mathfrak H} \coloneqq \left\{ H \in\{1, \xi,\xi^2,\dots\}: \frac{2 y}{x} \le H \le \frac{y}{\xi z}\right\}
\ee
be the set of relevant scales $H$; we will consider those primes $q$ in $(y/(\xi H),y/H]$ separately for each $H\in \mathfrak{H}$, noting that
$\cup_{H\in \mathfrak{H}} (\frac{y}{\xi H},\frac{y}{H}]$,
is a subinterval of $(z,x/2]$. 
By \eqref{z-def} and \eqref{y-def} for $H\in \mathfrak{H}$ we have
\be\label{H-bounds}
2 (\log x)^{\delta} \le H \le \frac{(\log x)^{1/2}}{\log\log x}.
\ee
For each $h\in \mathfrak{H}$, let ${\mathcal Q}_H$ be the set
of primes $q \in (y/(\xi H),y/H]$ with $|I_q| \geq 1$. 
From \eqref{rho}, we have
\begin{equation}\label{QH}
|{\mathcal Q}_H| \sim \rho(1-1/\xi)\frac{y}{H \log x}.
\end{equation}
Let
\[
\mathcal{Q} = \bigcup_{H\in\mathfrak H} \mathcal{Q}_H.
\]
For $q\in \mathcal{Q}$, let $H_q$ be the unique element of
$\mathfrak{H}$ such that
\be\label{Hq}
\frac{y}{\xi H_q} < q \le \frac{y}{H_q}.
\ee

Now fix a real number $M$ satisfying
\be\label{M}
\red{6< M \le 7}.
\ee
With $H$ fixed, we will examine separately the effect of the sieving by primes in $[2,H^{M}]$ and by the primes in $(H^{M},z]$.
We denote by $\bb$ a random residue class from $\Z/P\Z$, chosen with uniform probability, where we adopt the abbreviations
\[
P = P(z), \quad \sigma = \sigma(z), \quad \SS = S_z + \bb
\]
as well as the projections
\be\label{abbr-1}
P_1 = P(H^M), \quad \sigma_1 = \sigma(H^M), \quad
\bb_1 \equiv \bb \pmod{P_1}, \quad \SS_1 = S_{H^M} + \bb_1
\ee
and
\be\label{abbr-2}
P_2 = P(H^{M},z), \quad \sigma_2 = \sigma(H^M,z), \quad
\bb_2 \equiv \bb \pmod{P_2}, \quad \SS_2 = S_{H^M,z}+ \bb_2
\ee
with the convention that $\bb_1 \in \Z/P_1\Z$ and $\bb_2 \in \Z/P_2\Z$.
Thus, $\bb_1$ and $\bb_2$ are each uniformly distributed, are independent of each other, and likewise $\SS_1$ and $\SS_2$ are independent.
We also have the obvious relations
\[
P=P_1 P_2, \quad \sigma = \sigma_1 \sigma_2, \quad \SS = \SS_1 \cap \SS_2.
\]

For prime $q$ and $n \in \Z$, define the random set
\be\label{APH}
\mathbf{AP}(J;q,n) \coloneqq \{ n+qh: 1 \leq h \leq J\} \cap \mathbf{S}_1
\ee
that describes a portion of the progression $n \pmod{q}$ that survives the sieving process up to $H^{M}$.
Let $K\ge 2$ be a fixed integer parameter, which we will eventually take to be very large.
Given $\SS_1$, the probability that 
$\mathbf{AP}(KH;q,n) \subset \SS_2$ is about
$\sigma_2^{|\mathbf{AP}(KH;q,n)|}$, and if this occurs then removing the residue class $n\mod{q}$ will remove an essentially maximal number of elements.
  Central to our argument is the weight function
\begin{equation}\label{lambda-def}
 \bm{\lambda}(H;q,n) \coloneqq \begin{cases}
 \displaystyle \frac{1}{\sigma_2^{|\mathbf{AP}(KH;q,n)|}} &
 \text{ if } \mathbf{AP}(KH; q,n ) \subset \mathbf{S}_2, \\ 0 & \text{ otherwise.} \end{cases}
\end{equation}
Informally, $\bm{\lambda}(H;q,n)$ then isolates those $n$ with the (somewhat unlikely) property that the portion $\mathbf{AP}(KH;q,n)$ of the arithmetic progression
$n\mod q$ that survives the sieving process up to $H^{M}$, in fact also survives the sieving process all the way up to $z$.  
The weight nearly exactly counteracts the probability of this event, so that we anticipate $\bm{\lambda}(H;q,n)$ to
 be about 1 on average over $n$.
  In addition, $\bm{\lambda}(H;q,n)$ is skewed to be large for those $n$ with $\mathbf{AP}(KH;q,n)$ large.
  We will focus attention on those $n$ satisfying
  \[
  -Ky <n \le y,
  \]
  for outside this interval, if $q\in \mathcal{Q}_H$ then
  $\mathbf{AP}(KH;q,n)$ does not intersect the interval
  $[1,y]$ of primary interest.

Our aim is thus first select a random $\mathbf{b}\in\Z/P\Z$, and show that with high probability the random sets $\mathbf{S}_1$ and $\mathbf{S}_2$ behave as we expect for all scales $H\in\mathfrak{H}$. This implies that there is a good fixed choice $b\in\Z/P\Z$ where the (now deterministic) function $\lambda(H_q;q,n)$ is suitably concentrated on residue classes $n\mod{q}$ which contain many elements in $S=S_z+b$, for all $q$ in a suitable subset $\mathcal{Q}'\subseteq\mathcal{Q}$. In particular, this means that if we then select a residue class $n_q\mod{q}$ randomly with probability proportional to $\lambda(H_q;q,n)$, this residue class will typically contain many elements of $S$, for any $q\in\mathcal{Q}'$.

This is now precisely the situation of our hypergraph covering lemma, which we can then apply essentially as a black box. (The lemma is a minor variation of the one used in \cite{FGKMT} based on the ``R\"odl nibble'' or ``semi-random'' method; the proof is given in the appendix.) The conclusion from the lemma allows us to deduce that there is a choice of residue classes $n_q\mod{q}$ for $q\in\mathcal{Q}'$ which cover almost all of $S$. If we then choose $b\mod P(z,x/2)$ such that $b=n_q\mod{q}$ for all $q\in\mathcal{Q}'$ we then obtain \eqref{first_reduction}, and hence the result.

The paper is organized as follows. Theorem \ref{main} has previously been reduced to that of establishing \eqref{first_reduction}.  We will then reduce this task further to that of establishing Theorem \ref{second} (Second reduction) in the next section. In turn, Theorem \ref{second} will be reduced to
Theorem \ref{third} (Third reduction) in the following section. The final section is then dedicated to establishing Theorem \ref{third}.

\section{Greedy sieving via Hypergraph covering}
In this section we use our hypergraph covering lemma (Lemma \ref{hcl}, given below) to reduce the proof of Theorem \ref{main} to the claim that there is a good choice of $b$ for the initial sieving, which is given by Theorem \ref{second} below.

Recall the definition \eqref{Hq} of $H_q$ and that $S$ is the set $S_z+b$ depending on $b$.
\begin{theorem}[Second reduction]\label{second}
\red{Fix $\delta$ satisfying \eqref{delta1}, let $M-6$, $\xi-1$ be sufficiently small (in terms of $\delta$), $K$ sufficiently large in terms of $\delta$, and
$0 < \eps < \frac16 (M-6)$.  If $x$ is large enough, in terms of $\delta,M,\xi,K,\eps$,} then
there exists an integer $b$
and a set ${\mathcal Q}' \subset \mathcal{Q}$
such that
\begin{itemize}
\item[(i)]  one has
\begin{equation}\label{bsy-equi}
 |S \cap [1, y]| \le 2 \sigma y,
\end{equation}
\item[(ii)]  for all $q \in {\mathcal Q}'$, one has
\begin{equation}\label{laq}
 \sum_{-Ky<n\le y} \lambda(H_q;q,n) = \(1 + O\pfrac{1}{(\log x)^{\delta(1+\eps)} }\) (K+1)y,
\end{equation}
\item[(iii)]  for all but at most $\frac{ \rho x}{8 \log x}$ elements $n$ of $S \cap [1, y]$, one has
\begin{equation}\label{qhv}
 \sum_{q \in {\mathcal Q}'} \sum_{h \leq K H_q} \lambda(H_q; q, n - qh ) =  \(C_2 + O\pfrac{1}{(\log x)^{\delta(1+\eps)} }\) (K+1)y
\end{equation}
for some quantity $C_2$ independent of $n$ with
\begin{equation}\label{c1b}
 10^{2\delta} \leq C_2 \le 100.
\end{equation}
\end{itemize}
\end{theorem}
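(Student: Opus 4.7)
The strategy is a probabilistic existence argument. I would select $\mathbf{b}\in\Z/P\Z$ uniformly at random and let $\mathcal{Q}'(\mathbf{b})\subseteq\mathcal{Q}$ consist of those $q$ for which the main-term estimate \eqref{laq} holds. Then condition (i) holds with probability at least $1/2$ by Markov's inequality applied to $\E|\mathbf{S}\cap[1,y]|=\sigma y + O(1)$. The remaining task is to show, via first and second moment estimates, that with positive probability $\mathcal{Q}'$ contains all but a negligible fraction of $\mathcal{Q}$ and that condition \eqref{qhv} holds for all but at most $\rho x/(8\log x)$ values of $n\in\mathbf{S}\cap[1,y]$, then to combine the three estimates by a union bound.

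The key first-moment computation is $\E\bm{\lambda}(H;q,n)=1+O(H^{-2})$. Conditioning on $\mathbf{b}_1$, the quantity $|\mathbf{AP}(KH;q,n)|$ becomes deterministic; since $M\ge 4$ forces $H^M>KH$, the elements of $\mathbf{AP}$ are pairwise distinct modulo every prime $p\in(H^M,z]$, and $B$-boundedness then gives
\[
\PR(\mathbf{AP}\subset\mathbf{S}_2\mid\mathbf{b}_1)=\sigma_2^{|\mathbf{AP}|}\bigl(1+O(H^{-2})\bigr),
\]
which establishes the main term $(K+1)y$ of \eqref{laq}. For (iii), I would exploit the identity $\mathbf{1}[n\in\mathbf{S}]\,\bm{\lambda}(H_q;q,n-qh)=\mathbf{1}[n\in\mathbf{S}_1]\,\bm{\lambda}(H_q;q,n-qh)$, valid because whenever $n\in\mathbf{S}_1$ we have $n\in\mathbf{AP}(KH_q;q,n-qh)$, and hence $\bm{\lambda}\neq 0$ forces $n\in\mathbf{S}_2$. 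Conditioning on $\mathbf{b}_1$ then reduces the expectation to $\sigma_1(H_q)(1+O(H_q^{-2}))$. Summing over $(q,h)$ using $|\mathcal{Q}_H|\sim\rho(1-1/\xi)y/(H\log x)$, $\sigma_1(H)\sim C_1/(M\log H)$, and the crucial asymptotic $\sum_{H\in\mathfrak{H}} 1/\log H\sim\log(1/(2\delta))/\log\xi$ for $H$ ranging in a geometric progression over $[2(\log x)^\delta,(\log x)^{1/2}/\log\log x]$, and then dividing by $\PR(n\in\mathbf{S})\sim\sigma\sim C_1/\log x$, produces
\[
C_2=\frac{K}{K+1}\cdot\frac{\rho(1-1/\xi)}{M\log\xi}\log\frac{1}{2\delta}+o(1).
\]
Taking $K\to\infty$ and $\xi\to 1^+$ gives $C_2\to\rho\log(1/(2\delta))/M$; with $M$ slightly above $4$ the hypothesis \eqref{delta1} is exactly the statement that this exceeds $10^{2\delta}$, while the upper bound $C_2\le 100$ follows from $\rho\le 1$ together with $\delta$ bounded away from $0$ on any relevant scale.

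To promote these expected-value statements to the required "most $q$" and "most $n$" conclusions, one needs a correlation / variance estimate (the correlation lemma alluded to in the excerpt) showing that $\E\bm{\lambda}(H;q,n_1)\bm{\lambda}(H';q',n_2)=(1+o(1))\E\bm{\lambda}(H;q,n_1)\E\bm{\lambda}(H';q',n_2)$ when the pairs are suitably distinct, together with analogous bounds for the mixed products involving $\mathbf{1}[n\in\mathbf{S}]$ needed for \eqref{qhv}. Chebyshev then gives per-$q$ failure probability $o((\log x)^{-2\delta(1+\varepsilon)})$, so that $\E|\mathcal{Q}\setminus\mathcal{Q}'|=o(|\mathcal{Q}|)$, and a per-$n$ failure probability small enough that $\E\#\{n\text{ bad}\}\ll\rho x/(\log x)^{1+c}$ for some $c>0$, comfortably beating the threshold $\rho x/(8\log x)$. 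A union bound over failures of (i), of the size requirement on $\mathcal{Q}'$, and of the count of bad $n$ yields positive probability of simultaneous success, and hence a deterministic $b$ with all the desired properties. I expect the principal obstacle to be establishing the correlation lemma with quantitative error terms small enough to survive division by the factor $(\log x)^{-2\delta(1+\varepsilon)}$ in the tolerance; this is where the $B$-boundedness hypothesis enters crucially. A secondary bookkeeping obstacle is verifying that replacing the deterministic $\mathcal{Q}$ by the random $\mathcal{Q}'$ in the sum over $q$ in \eqref{qhv} perturbs the main term by at most the allowed $O((\log x)^{-\delta(1+\varepsilon)})$ relative error.
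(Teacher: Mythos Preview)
Your approach is essentially the paper's: draw $\mathbf{b}$ uniformly, compute first and second moments of $\sum_n\bm{\lambda}(H;q,n)$ and of $\sum_{q,h}\bm{\lambda}(H_q;q,n-qh)$ on $\mathbf{S}\cap[1,y]$ (this is the paper's Theorem~\ref{third}), then use Chebyshev/Markov concentration and a union bound over scales $H\in\mathfrak H$. Your identity $\mathbf{1}[n\in\mathbf{S}]\bm{\lambda}=\mathbf{1}[n\in\mathbf{S}_1]\bm{\lambda}$ is exactly the paper's trick, and your computation of $C_2$ and its verification against \eqref{delta1} are correct.

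There is, however, a genuine gap in your ``key first-moment computation.'' You claim that pairwise distinctness of $\mathbf{AP}(KH;q,n)$ modulo each $p\in(H^M,z]$ together with $B$-boundedness yields
\[
\PR\bigl(\mathbf{AP}\subset\mathbf{S}_2 \mid \mathbf{b}_1\bigr)=\sigma_2^{|\mathbf{AP}|}\bigl(1+O(H^{-2})\bigr)
\]
pointwise in $q$ and $n$. This is false. For a fixed $p$, the probability that $k$ distinct residues all avoid $I_p+\mathbf{b}_{2,p}$ is $1-|N_p-I_p|/p$, which differs from $(1-|I_p|/p)^k$ by a term governed by how many differences $u-u'$ lie in $I_p-I_p$. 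For $\mathbf{AP}$ the differences are $q(h-h')$, and whether $q(h-h')\pmod p\in I_p-I_p$ is a nontrivial constraint on $q\pmod p$; for specific $q$ it may well hold for many $p$, making the multiplicative error uncontrolled. The paper's Lemma~\ref{correlation} makes this precise: the error has a ``main'' piece $O(|\mathcal U|^2/H^M)=O(H^{-(M-2)})$ plus correction terms $E_{A}(q(h-h');H)$ that are \emph{not} small pointwise and must be killed by averaging over $q$ (Lemma~\ref{ds} and the bound \eqref{eq-bound}, which uses Brun--Titchmarsh). So the correlation lemma you relegate to the second-moment stage is in fact already indispensable at first moments, and the first-moment asymptotic you need is for $\E\sum_{q\in\mathcal Q_H}\sum_n\bm{\lambda}$, not for each individual $\E\bm{\lambda}(H;q,n)$.

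A related quantitative overshoot: your asserted per-$q$ failure probability $o((\log x)^{-2\delta(1+\eps)})$ would require $\operatorname{Var}\bigl(\sum_n\bm{\lambda}\bigr)\ll y^2/(\log x)^{4\delta(1+\eps)}$, i.e.\ $H^{M-2}\gg(\log x)^{4\delta(1+\eps)}$, which with $H\asymp(\log x)^\delta$ forces $M>6$, contradicting $M\le 5$. What the argument actually delivers (and all that is needed) is $\E|\mathcal Q_H\setminus\mathcal Q'_H|\ll|\mathcal Q_H|/H^{M-4-2\eps}$; the constraint $M>4+\delta$ in \eqref{M} is exactly what makes this exponent positive and lets the replacement of $\mathcal Q$ by $\mathcal Q'$ in \eqref{qhv} go through (the paper's $\mathcal E_H'$ argument via Cauchy--Schwarz). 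Your ``secondary bookkeeping obstacle'' is thus not merely bookkeeping: it is where the choice of $M$ enters, and it needs the same averaged correlation estimates as above.
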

Theorem \ref{second} is saying that there is a good choice of $b\in \Z/P\Z$ such that we can then perform the second sieving stage effectively. The conclusions are what we would expect for ``typical'' $b$, so this merely sets the stage for the greedy sieve.

If we remove a residue class $\mathbf{n_q}\mod{q}$ where $\mathbf{n_q}$ is chosen randomly proportional to $\lambda(H_q;q,\cdot)$, then together \eqref{laq} and \eqref{qhv} say that the expected number of times $n\in S\cap[1,y]$ is removed is about $C_2>1$ (apart from a small exceptional set of $n$). This means that if we could realize these random variables so that the behavior was very close to this expectation, we would sieve in a perfectly uniform manner and would successfully remove almost all of $S\cap[1,y]$. The fact that we can pass from the random variables to such a uniform sieve is a consequence of the hypergraph covering lemma. It is vital that $C_2>1$, and the fact that we will ultimately succeed with $C_2$ bounded (rather than of size $\log\log{x}$) corresponds to us being able to take $y$ as large as $x(\log{x})^\delta$.

The fact that we have good error terms in the asymptotics and the slightly stronger lower bound $C_2>10^{2\delta}$ is needed for our hypergraph covering lemma, but this is not a limiting feature of our argument.

Another way to look at Theorem \ref{second} is that equation \eqref{laq} says that $\lambda(H_q;q,n)$ is about $1$ on average.  However,
when $n$ is drawn from the smaller set $S \cap [1, y]$ (which has density $\approx \sigma$ in
$[1,y]$), the quantity $\lambda(H_q;q, n-qh)$ appearing in
\eqref{qhv} is biased to be a bit larger (in our construction, it will eventually behave like
$\frac{\log y}{\log(y/q)}$ on the average over $q\in{\mathcal Q}'$), since $n\in AP(KH;q,n-hq)$ is already known to lie in $S$.  It is this bias that
ultimately allows us to gain somewhat over the trivial bound of $\gg x$ on the gap size in
Theorem \ref{main}.

To reduce Theorem \ref{main} to Theorem \ref{second}, we will use the following hypergraph covering lemma.

\begin{lemma}[Hypergraph covering lemma]\label{hcl} 
Suppose that $0 < \delta \le \frac12$, \red{$K>1$,
let $y\ge y_0(\delta,K)$ with $y_0(\delta,K)$} sufficiently large, and let $V$ be finite set with $|V|\le y$. 
Let $1\le s\le y$, and suppose that
$\mathbf{e}_1,\ldots,\mathbf{e}_s$ are random subsets of
$V$ satisfying the following:
\begin{align}
|\mathbf{e}_i| &\leq \red{\frac{K(\log y)^{1/2}}{\log\log y}} \qquad (1\le i\le s),\label{size-bound} \\
\PR( v \in \mathbf{e}_i ) &\leq y^{-1/2 - 1/100} \qquad
(v\in V, 1\le i\le s), \label{sparsity}\\
\sum_{i=1}^s \PR( v, v' \in \mathbf{e}_i ) &\leq y^{-1/2} \qquad (v,v'\in V, v\ne v'),\label{codegree}\\
\Bigg|\sum_{i=1}^s \PR( v \in \mathbf{e}_i ) -  C_2\Bigg| &\le  \eta \qquad (v\in V),\label{uniform} 
\end{align}

where $C_2$ and $\eta$ satisfy 
\begin{equation}\label{c2-bound}
 10^{2\delta} \leq C_2 \le 100, \qquad \eta \ge \frac{1}{(\log y)^{\delta}\log\log y}.
\end{equation}
Then there are subsets $e_i$ of $V$, $1\le i\le s$,
with $e_i$ being in the support of $\mathbf{e}_i$
for every $i$, and such that
\begin{equation}\label{vbound}
 \Big| V \backslash \bigcup_{i=1}^s e_i \Big| \le C_3 \eta |V|,
\end{equation}
where $C_3$ is an absolute constant.
\end{lemma}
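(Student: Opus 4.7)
The plan is to use the Rödl (Pippenger--Spencer) ``semi-random'' or nibble method. We will process the hyperedges $\mathbf{e}_i$ across $T = \lceil C \log(1/\eta)/(\theta C_2) \rceil$ rounds, where $\theta \in (0,1)$ is a small constant and $C$ is an absolute constant; in each round $t$ we independently activate each still-available edge $i \in I_{t-1}$ with probability $\theta$, realize the activated edges (from their marginal distributions; by coupling we may assume the $\mathbf{e}_i$ are mutually independent), and remove the covered vertices from the residual set $V_t$, keeping track of the surviving index set $I_t = I_{t-1}\setminus J_t$. For indices $i$ never activated, we set $e_i$ to be any fixed element of the support of $\mathbf{e}_i$. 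Since $V\setminus \bigcup_i e_i \subseteq V_T$, it suffices to show $|V_T| \leq C_3 \eta |V|$ with positive probability.

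The per-round estimate is the standard computation: conditional on the history through round $t-1$, for $v \in V_{t-1}$,
\[
\PR(v \in V_t \mid v \in V_{t-1}) = \prod_{i \in I_{t-1}}\bigl(1 - \theta \PR(v \in \mathbf{e}_i)\bigr) = \exp\Bigl(-\theta\sum_{i \in I_{t-1}} \PR(v \in \mathbf{e}_i) + O(y^{-1/2-1/100})\Bigr),
\]
where the error term uses the sparsity \eqref{sparsity} to bound the quadratic terms in $\log(1-\theta \PR(v\in\mathbf{e}_i))$. Using \eqref{uniform} together with the random thinning of $I_{t-1}$, the conditional mass satisfies $\sum_{i\in I_{t-1}} \PR(v\in\mathbf{e}_i) = (1-\theta)^{t-1}(C_2\pm\eta)$ up to fluctuations controlled by the codegree bound \eqref{codegree}. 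Telescoping, the expected terminal density per vertex is
\[
\exp\Bigl(-C_2\bigl(1-(1-\theta)^T\bigr) + o(1)\Bigr),
\]
which can be pushed below $\eta/2$ by the choice of $T$.

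To upgrade the expectation bound to a pointwise bound on $|V_t|$ we use concentration. The bounded edge size \eqref{size-bound} yields a bounded-differences estimate for the martingale $(|V_t|)_{t\le T}$: swapping one realization of an $\mathbf{e}_i$ changes $|V_t|$ by at most $(\log y)^{1/2}/\log\log y$. Combined with the codegree bound \eqref{codegree}, which controls the variance contribution from pairs of vertices being simultaneously covered, an application of Azuma--Hoeffding (or a Bennett/Talagrand-type martingale inequality) shows $|V_t|$ concentrates around its conditional expectation within a multiplicative factor of $1+o(1/T)$, with failure probability $o(1/T)$. A union bound over the $T = O(\log\log y)$ rounds then yields global concentration, so that with positive probability $|V_T| \le C_3 \eta |V|$; selecting such a realization of the process gives the desired deterministic $e_i$.

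The hardest step is obtaining the per-round concentration to the precision needed: the multiplicative fluctuation of $|V_t|/|V_{t-1}|$ around the targeted value $\exp(-\theta C_2(1-\theta)^{t-1})$ must be of order $o(\eta/T)$ so that the accumulated error across $T$ rounds is absorbed into the $C_3\eta|V|$ target rather than dominating it. The lower bound $\eta \ge 1/((\log y)^\delta\log\log y)$ in \eqref{c2-bound} is precisely calibrated to make this reconciliation feasible given \eqref{size-bound}, \eqref{sparsity}, and \eqref{codegree}; relaxing any of these hypotheses would leave us only with the Poisson-type bound $e^{-C_2}|V|$ instead of the sharper $C_3\eta|V|$ needed for the applications.
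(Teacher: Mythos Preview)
Your telescoping computation contains a fatal error. If you activate each $i\in I_{t-1}$ with probability $\theta$ and then discard it, then (as you note) $\sum_{i\in I_{t-1}}\PR(v\in\mathbf{e}_i)\approx (1-\theta)^{t-1}C_2$, and the conditional survival probability in round $t$ is $\approx\exp(-\theta(1-\theta)^{t-1}C_2)$. Telescoping over $t$ gives
\[
\PR(v\in V_T)\;\approx\;\exp\Bigl(-C_2\,\theta\sum_{t=1}^{T}(1-\theta)^{t-1}\Bigr)
\;=\;\exp\bigl(-C_2(1-(1-\theta)^T)\bigr)\;\ge\; e^{-C_2}\;\ge\; e^{-100}
\]
for every $T$. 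This is a fixed positive constant, not $O(\eta)$; it cannot be ``pushed below $\eta/2$ by the choice of $T$''. The reason is structural: in your scheme each $\mathbf{e}_i$ is realized and deployed at most once, so the total coverage mass available to any $v$ is $\sum_i\PR(v\in\mathbf{e}_i)\approx C_2$, and an independent-sampling nibble can never drive the survival probability below $e^{-C_2}$.

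The paper avoids this by invoking the general covering theorem of \cite{FGKMT} (Theorem~A in the appendix). There the indices are pre-partitioned into $m\approx\log(1/\eta)/\log\beta$ groups $I_1,\ldots,I_m$ carrying geometrically decreasing degree mass $d_{I_j}(v)\approx\beta^{1-j}\log\beta$, with $\beta$ chosen just above $10^{2\delta}$; this is exactly where the lower bound $C_2\ge 10^{2\delta}$ in \eqref{c2-bound} enters, and it is also what forces $10^m<(\log y)^{1/2-\eps}$ so that the hypothesis \eqref{delta-small} of Theorem~A can be met with $\tau=y^{-1/100}$. The crucial difference from your scheme is encoded in the recursion
\[
P_j(v)=P_{j-1}(v)\exp\bigl(-d_{I_j}(v)/P_{j-1}(v)\bigr):
\]
the random sets $\mathbf{e}'_i$ produced by Theorem~A are not independent fresh samples of $\mathbf{e}_i$ but are constructed adaptively relative to the survivor set, so that at stage $j$ the \emph{effective} degree of a surviving vertex is $d_{I_j}(v)/P_{j-1}(v)\approx\log\beta$, a constant independent of $j$. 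With the geometric schedule this yields $P_m(v)\approx\beta^{-m}\approx\eta$, even though the total raw degree $\sum_j d_{I_j}(v)$ is still only $O(C_2)$. Your uniform-thinning-and-discard nibble misses precisely this amplification by $1/P_{j-1}(v)$; the concentration arguments you outline would at best certify an uncovered set of size $\asymp e^{-C_2}|V|$, which is far too large.
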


This lemma is proven using almost exactly the same argument used to prove \cite[Corollary 4]{FGKMT} (after some minor changes of notation); we defer the proof to the appendix.

The conditions \eqref{size-bound}, \eqref{sparsity} and \eqref{codegree} should be thought of as conditions which ensure that the randoms sets $\mathbf{e}_i$ typically spread out and cover most vertices in $V$ fairly evenly. The condition \eqref{c2-bound} ensures that typically all vertices are covered slightly more than once in a uniform manner. Provided these conditions are fulfilled then the conclusion \eqref{vbound} is that there is a non-zero probability that virtually all vertices are covered, and so there is a deterministic realization of the random variables which covers virtually all the vertices. The key point is that $C_2$ can be taken to be bounded, since this means that the covering sets $e_i$ are close to disjoint, and this is what allows us to improve the situation of trying to sieve independently for each $q$.

\begin{proof}[Reduction of Theorem \ref{main} to Theorem \ref{second}]
We are now in a position to deduce \eqref{first_reduction}, and hence Theorem \ref{main}, from
Theorem \ref{second}. Let $b$ and $\mathcal{Q}'$ be the quantities whose existence is asserted by Theorem \ref{second}, and so $S=S_z+b$.

Property (iii) of Theorem \ref{second} implies that there is a set
$V \subseteq S \cap [1, y]$ ,
containing all but at most $\frac{\rho x}{8 \log x}$ elements
of $S \cap [1, y]$, and such that  \eqref{qhv} holds
for all $n\in V$.
   For each $q \in {\mathcal Q}'$, we choose a random integer $\mathbf{n}_q$ with probability
density function
\begin{equation}\label{ndens}
 \PR( \mathbf{n}_q = n ) = \frac{\lambda(H_q;q,n)}{\sum_{-Ky<n'\le y} \lambda(H_q;q,n')}.
\end{equation}
Note that by \eqref{laq} that the denominator is non-zero, so that this is a well-defined probability distribution.
   We will not need to assume any independence hypotheses on the $\mathbf{n}_q$.  For each $q \in {\mathcal Q}'$, we then define the random subset $\mathbf{e}_q$ of $V$ by the formula
\begin{equation}\label{eqdef}
\mathbf{e}_q \coloneqq V \cap \{ \mathbf{n}_q + hq: 1 \leq h \leq KH_q \}.
\end{equation}
Our goal is to show that there are choices $n_q$ of the random variable $\mathbf{n}_q$ which occur with positive probability such that the corresponding sets $e_q$ cover most of $V$. Specifically, we wish to use Lemma \ref{hcl} to show that
\be\label{Veq}
\bigg| V \backslash \bigcup_{q \in {\mathcal Q}'} e_q \bigg| \leq \frac{\rho x}{8 \log x}.
\ee
By construction, if \eqref{Veq} holds then for each $q\in \mathcal{Q}'$ there is a number $n_q$ such that
$$ e_q \subset \{ n \in V: n \equiv n_q \pmod{q} \}.$$
Taking $b\equiv n_q \pmod{q}$ for all $q \in {\mathcal Q}'$, we find that
\[
\big| (S_{x/2}+b) \cap [1,y] \big| \le |S\cap[1,y]\backslash V|+\bigl| V \backslash \bigcup_{q \in {\mathcal Q}'} e_q \Bigr|\le \frac{\rho x}{8 \log x} + \frac{ \rho x}{8 \log x} = \frac{\rho x}{4 \log x},
\]
as required for \eqref{first_reduction}.
The fractions $\frac18$ and $\frac14$ above are irrelevant to
the determination of the best exponent in Theorem \ref{main},
and were chosen for convenience.

Thus it remains to construct $e_q$ satisfying \eqref{Veq}, and this is
accomplished by Lemma \ref{hcl}.  We wish to apply Lemma \ref{hcl} with $s=|\mathcal{Q}'|$,
$\{\mathbf{e}_1,\ldots,\mathbf{e}_s\} = \{ \mathbf{e}_q : q\in \mathcal{Q}'\}$, $C_2$ as given by Theorem \ref{second}, and
\[
\eta = \frac{\rho/20}{C_3 (\log x)^{\delta}}.
\]
With this choice of parameters we see from \eqref{bsy-equi}, \eqref{Mertens}, and \eqref{y-def} that
$$ C_3 \eta |V| \le  \frac{\rho/10}{(\log x)^{\delta}} \frac{y}{\log z} \sim (\rho/10)   \frac{x}{\log x}.$$
Hence, \eqref{Veq} follows from \eqref{vbound} if $x$ is large enough.   Thus, it suffices to verify the hypotheses \eqref{size-bound}, \eqref{sparsity}, \eqref{codegree}, \eqref{uniform} and \eqref{c2-bound} of the lemma, which we accomplish using the conclusions \eqref{laq} and \eqref{qhv} of
Theorem \ref{second}.

Note that if $q \in {\mathcal Q}'$, then from \eqref{eqdef} and \eqref{H-def} we have
\[ 
\red{|\mathbf{e}_q| \leq KH_q \leq \frac{Ky}{z} = \frac{K(\log x)^{1/2}}{\log\log x} \le \frac{K(\log y)^{1/2}}{\log\log y}}
\]
which gives \eqref{size-bound}.  Similarly, for $n \in V$ and $q \in {\mathcal Q}'$, we have from \eqref{eqdef}, \eqref{ndens}, and \eqref{lambda-def} that
\begin{align*}
\PR( n \in \mathbf{e}_q ) &= \sum_{1 \leq h \leq KH_q} \PR( \mathbf{n}_q = n - h q ) \\
&\ll \frac{1}{y} \sum_{1 \leq h \leq KH_q} \lambda(H_q;q, n-hq) \\
&\ll \frac{1}{y} H_q \sigma_2^{-H_q}
\ll \frac{1}{y^{9/10}}
\end{align*}
which gives \eqref{sparsity} for $y$ large enough.

Applying \eqref{eqdef}, \eqref{ndens}, \eqref{laq}, and \eqref{qhv} successively
yields 
\begin{align*}
 \sum_{q \in { \mathcal{Q}'} }\PR( v \in \mathbf{e}_q ) &= \sum_{q\in\mathcal{Q}'}
 \sum_{h\le KH_q} \PR(\mathbf{n}_q = v-hq) \\
&=  \sum_{q\in\mathcal{Q}'}
 \sum_{h\le KH_q} \frac{\lambda(H_q;q,v-hq)}{\sum_n \lambda(H_q;q,n)} \\
  &= C_2 + \red{O( (\log x)^{-(1+\eps)\delta})},
\end{align*}
and \eqref{uniform} follows.
We now turn to \eqref{codegree}.  Observe from \eqref{eqdef} that for distinct $v,v' \in V$, one can only have $v,v' \in \mathbf{e}_q$ if $q$ divides $v-v'$.  Since $|v-v'| \le 2y$ and $q \geq z > \sqrt{2y}$, there is at most one $q$ for which this is the case, and \eqref{codegree} now follows from \eqref{sparsity}.
This concludes the derivation of \eqref{first_reduction} from Theorem \ref{second}.
\end{proof}
To complete the proof of Theorem \ref{main}, we need to
prove  Theorem \ref{second} and Lemma \ref{hcl}.
The proof of Theorem \ref{second} depends on various first and second moment
estimations of the weights, which are given in the next two sections.  The proof of Lemma \ref{hcl} will occupy the Appendix.

 %
 %

\section{Concentration of $\lambda(H;q,n)$}
\label{sec:concentration}

 %
 %
 %

In this section, we
 deduce Theorem \ref{second} from the following moment calculations.

\begin{theorem}[Third reduction]\label{third}
Assume that $M\ge 2$.  Then
\begin{itemize}
\item[(i)]  One has
\begin{align}
\E  |\SS \cap [1, y]| &= \sigma y,  \label{ifirst} \\
  \E |\SS \cap [1, y]|^2 &= \left(1+O\left(\frac{1}{\log y}\right)\right) (\sigma y)^2. \label{szyj}
\end{align}
\item[(ii)]  For every $H\in \mathfrak H$, and for $j\in \{0,1,2\}$ we have
\be
 \E \sum_{q \in {\mathcal Q}_H} \left(\sum_{-Ky<n\le y} \bm{\lambda} (H;q,n)\right)^j = \left(1+O\left(\frac{1}{H^{M-2}}\right)\right) ((K+1)y)^j |{\mathcal Q}_H|.
 \label{ii}
\ee
\item[(iii)]
For every $H\in \mathfrak H$, and for $j\in \{0,1,2\}$ we have
\begin{equation}\label{da}
 \E \sum_{n \in \SS \cap [1, y]} \Bigg(\sum_{q \in {\mathcal Q}_H} \sum_{h \leq KH} \bm{\lambda}(H; q, n-qh )\Bigg)^j = \left(1+O\left(\frac{1}{H^{M-2}}\right)\right) \left(\frac{|{\mathcal Q}_H| \red{\cdot  \fl{KH} }}{\sigma_2 }\right)^j \sigma y.
\end{equation}
\end{itemize}
\end{theorem}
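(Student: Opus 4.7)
The proof exploits the independence of $\SS_1$ and $\SS_2$ (which follows from the independence of the uniform random variables $\bb_1\in\Z/P_1\Z$ and $\bb_2\in\Z/P_2\Z$) together with a Mertens-type correlation estimate --- the Lemma \ref{correlation} alluded to in the preceding text --- that asserts that for a finite set $A\subset\Z$,
\[
\PR(A\subset\SS_2) = \sigma_2^{|A|}\bigl(1+\epsilon(A)\bigr),
\]
with $\epsilon(A)$ small provided the reductions $A\bmod p$ have few collisions for primes $p\in(H^M,z]$. This is derived from the Chinese Remainder Theorem: the probability factors as $\prod_p(1-|\bigcup_{a\in A}(I_p+a)|/p)$, and the deviation from $\sigma_2^{|A|}$ is controlled by overlaps between the translates $I_p+a$, which are rare for generic $A$ and rely on $B$-boundedness \eqref{ip-bound}.

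For part (i), the first moment $\E|\SS\cap[1,y]|=\sigma y$ is immediate from $\PR(n\in\SS)=\sigma$ by uniformity of $\bb$. The second moment uses the same factorization: for $n\ne n'$,
\[
\PR(n,n'\in\SS) = \prod_{p\le z}\left(1-\frac{|I_p\cup(I_p+n-n')|}{p}\right),
\]
which equals $\sigma^2$ times local correction factors, differing from $1$ only at primes $p\mid n-n'$ or primes where $I_p$ intersects $I_p+(n-n')$ nontrivially. Summing over ordered pairs $(n,n')\in[1,y]^2$ with $n\ne n'$ and applying \eqref{Mertens}, the total deviation is $O(\sigma^2 y^2/\log y)$, yielding \eqref{szyj}.

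For the moments in parts (ii) and (iii), the central observation is that, conditionally on $\SS_1$,
\[
\E\bigl[\bm{\lambda}(H;q,n)\bigm|\SS_1\bigr] = \frac{\PR\bigl(\mathbf{AP}(KH;q,n)\subset\SS_2\bigm|\SS_1\bigr)}{\sigma_2^{|\mathbf{AP}(KH;q,n)|}},
\]
which by the correlation estimate equals $1+O(1/H^{M-2})$ on average: the progression $\{n+qh:1\le h\le KH\}$ has step $q>y/(\xi H)$, so its elements are distinct modulo every prime $p\in(H^M,z]$ once $p>KH$, and $\mathbf{AP}$ inherits this genericity. This gives the $j=1$ case of \eqref{ii} after summing over $-Ky<n\le y$ and $q\in\mathcal{Q}_H$; the $j=2$ case follows by applying the same reasoning to the union $\mathbf{AP}(KH;q,n)\cup\mathbf{AP}(KH;q',n')$, where the dominant contribution is from $(q,n)\ne(q',n')$ with no cross-AP collisions, and the diagonal/collision contributions are negligible. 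For part (iii), the decisive identity is
\[
1_{n\in\SS}\,\bm{\lambda}(H;q,n-qh) = 1_{n\in\SS_1}\,\bm{\lambda}(H;q,n-qh),
\]
which holds because when the right-hand weight is nonzero and $n\in\SS_1$, the point $n=(n-qh)+qh$ lies in $\mathbf{AP}(KH;q,n-qh)\subset\SS_2$, so $n\in\SS$; conversely, $\SS\subset\SS_1$. Taking expectations using independence, each summand equals $\sigma_1(1+O(1/H^{M-2}))$, and summing over $n\in[1,y]$, $q\in\mathcal{Q}_H$, $h\le KH$ produces $|\mathcal{Q}_H|KH\sigma_1 y=(|\mathcal{Q}_H|KH/\sigma_2)\sigma y$, the leading term in \eqref{da}. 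The second moment for (iii) is treated by expanding the square and combining this identity with the two-AP correlation analysis from (ii).

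The principal technical obstacle is the careful control of error terms from ``bad'' configurations --- pairs of points in the union of $\mathbf{AP}$-sets coinciding modulo some prime $p\in(H^M,z]$ --- and verifying that their total contribution, after summing over index pairs or quadruples, is bounded by $O(1/H^{M-2})$ times the main term. The hypothesis $M>4+\delta$ in \eqref{M} is calibrated precisely for this: cross-AP collisions require $p$ to divide a specific difference of size $O(y)$, and the one-dimensionality \eqref{Mertens} together with $B$-boundedness gives the required $O(1/H^{M-2})$ density bound on such ``bad'' primes.
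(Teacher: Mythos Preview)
Your outline matches the paper's approach closely: the $\SS_1$/$\SS_2$ split, Lemma~\ref{correlation} for $\PR(A\subset\SS_2)$, and the identity $1_{n\in\SS}\,\bm{\lambda}(H;q,n-qh)=1_{n\in\SS_1}\,\bm{\lambda}(H;q,n-qh)$ are exactly the ingredients used. However, there is one genuine gap and two inaccuracies worth flagging.

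\textbf{The gap: the $j=2$ case of (iii).} Your sentence ``the second moment for (iii) is treated by expanding the square and combining this identity with the two-AP correlation analysis from (ii)'' would produce the wrong main term. In (ii), the two APs $\mathbf{AP}(KH;q,n_1)$ and $\mathbf{AP}(KH;q,n_2)$ are generically disjoint, so $\E[\bm{\lambda}_1\bm{\lambda}_2\mid\SS_1]\approx 1$. In (iii), by contrast, the two APs are $\mathbf{AP}(KH;q_1,n-q_1h_1)$ and $\mathbf{AP}(KH;q_2,n-q_2h_2)$, and since $n\in\SS_1$ the point $n$ lies in \emph{both}. Hence $|\mathbf{AP}_1\cup\mathbf{AP}_2|\le |\mathbf{AP}_1|+|\mathbf{AP}_2|-1$, and Lemma~\ref{correlation} gives
\[
\E\bigl[\bm{\lambda}_1\bm{\lambda}_2\bigm|\SS_1\bigr]
=\sigma_2^{-|\mathbf{AP}_1|-|\mathbf{AP}_2|}\,\PR(\mathbf{AP}_1\cup\mathbf{AP}_2\subset\SS_2)
\approx \sigma_2^{-1},
\]
not $\approx 1$. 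This forced overlap is precisely what supplies the extra factor of $\sigma_2^{-1}$ needed to reach the target $(|\mathcal{Q}_H|KH/\sigma_2)^2\sigma y$ rather than $(|\mathcal{Q}_H|KH)^2\sigma_1 y$, which is off by $\sigma_2$. The paper makes this point explicitly.

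\textbf{Error control.} Your claim that the AP elements are ``distinct modulo every prime $p\in(H^M,z]$'' is true but insufficient: the error in Lemma~\ref{correlation} is governed not by coincidences $v\equiv v'\pmod p$ but by events $v-v'\in I_p-I_p\pmod p$, which are not ruled out by distinctness when $|I_p|>1$. The paper packages these errors into the function $E_A(m;H)$ of \eqref{eam}, bounds its average by Lemma~\ref{ds}, and for the sum over $q\in\mathcal{Q}_H$ invokes Brun--Titchmarsh to obtain the key estimate \eqref{eq-bound}. This machinery (particularly the appeal to Brun--Titchmarsh) is the actual work behind your ``careful control of error terms'', and it is where the upper bound $H\le (\log x)^{1/2}/\log\log x$ from \eqref{H-bounds} is used.

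\textbf{Misplaced hypothesis.} The condition $M>4+\delta$ from \eqref{M} plays no role in Theorem~\ref{third}; the statement and proof require only $M\ge 2$. The stronger bound on $M$ is used only in the \emph{deduction} of Theorem~\ref{second} from Theorem~\ref{third}, where one needs the error $O(H^{-(M-2)})$ to beat various powers of $H$ arising from Chebyshev/Markov arguments.
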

We remind the reader that in Theorem \ref{third} the random variables $\mathbf{S}$ and $\bm{\lambda}$ are defined in terms of the random variable $\mathbf{b}$ chosen uniformly in $\Z/P\Z$, not the random variables $\mathbf{n_q}$ we encountered in the previous section.

Note that for every $n \in [1,y]$ and $h \leq KH$ we have $n-qh \in \red{(-Ky,y]}$, so the quantity in \eqref{da} is well-defined.  As with the previous theorem, the quantity $\boldsymbol{\lambda}(H;q,n$) behaves like $1$ on the average when $n$ is drawn from $[-Ky,y] \cap \Z$, but for $n$ drawn from $\SS \cap [1, y]$ (in particular, $n\in \SS_2$), the quantity $\boldsymbol{\lambda}(H; q, n-qh )$ is now biased to have an average value of approximately $\sigma_2^{-1}$ because $n-qh+qh=n$ is automatically
in $\SS_2$; recall the definition \eqref{lambda-def} of $\lambda(H; q, n-qh )$.

\begin{proof}[Deduction of Theorem \ref{second} from Theorem \ref{third}]
We draw $\bb$ uniformly at random from $\Z/P\Z$.  It will suffice to generate a random set $\bm{\mathcal Q}'$ such that the random function $\bm{\lambda}$ defined in \eqref{lambda-def} satisfies the conclusions of Theorem \ref{second} (with $b$ replaced by $\bb$) hold with positive probability - in fact, we will show that they hold with probability $1+o(1)$.

Assume that $M$ satisfies \eqref{M}.
  From Theorem \ref{third}(i) we have
$$  \E \big||\SS \cap [1, y]| - \sigma y\big|^2 \ll \frac{(\sigma y)^2}{\log y}.$$
Hence by Chebyshev's inequality, we see that
\be\label{Snorm}
\PR \(|\SS \cap [1, y]| \le 2\sigma y \)
=1 - O(1/\log x),
\ee
verifying \eqref{laq} in Theorem \ref{second}.
Let $H \in {\mathfrak H}$.
From Theorem \ref{third}(ii) we have  (recall that our implied constants may depend on $K$)
\be\label{sumlamqn}
 \E \sum_{q \in {\mathcal Q}_H} \Bigg(\sum_{-Ky<n\le y} \bm{\lambda}(H;q,n) - (K+1)y\Bigg)^2 \ll \frac{y^2 |{\mathcal Q}_H|}{H^{M-2}}.
 \ee
Now let $\bm{\mathcal Q}'_H$ be the subset of $q\in {\mathcal Q}_H$
with the property that
\begin{equation}\label{manx}
\Big| \sum_{-Ky < n \le y} \bm{\lambda}(H;q,n)- (K+1)y \Big| \le \frac{y}{H^{1+\eps}}.
\end{equation}
It follows from \eqref{sumlamqn} and \eqref{manx} that
\be\label{QHQH'}
\E  |{\mathcal Q}_H \backslash {\bm{\mathcal Q}}'_{H}| \ll \frac{|\mathcal{Q}_H|}{H^{M-4-2\eps}}.
\ee
By Markov's inequality, it follows that
with probability $1 - O(H^{-\eps})$, one has
\begin{equation}\label{q-excep}
 |{\mathcal Q}_H \backslash {\bm{\mathcal Q}}'_{H}| \ll \frac{ |{\mathcal Q}_H| }{H^{M-4-3\eps}}.
\end{equation}
By \eqref{M}, we have $M>4+3\eps$ for small enough $\eps$, that is, the exponent in the denominator in
\eqref{q-excep} is positive.
Since $\sum_{H\in \mathfrak{H}} H^{-\eps} \ll (y/x)^{-\eps} \ll (\log x)^{-\delta \eps}$, with probability $1-O((\log x)^{-\delta \eps})$
the relation \eqref{q-excep} holds for every $H\in \mathfrak{H}$
simultaneously.  We now set
$$ {\bm{\mathcal Q}'} \coloneqq \bigcup_{H \in {\mathfrak H}} {\bm {\mathcal Q}}'_{H}.$$
Then, on the probability $1-o(1)$ event that \eqref{q-excep} holds for every $H$
and that \eqref{Snorm} holds, items (i) \eqref{bsy-equi} and (ii) \eqref{laq}
of Theorem \ref{second} follow upon recalling \eqref{manx} and the lower
bound $H\gg (\log x)^{\delta}$.

We work on part (iii) of Theorem \ref{second} using Theorem \ref{third}(iii) in a similar fashion to previous arguments.  We have
$$ \E \sum_{n \in \mathbf{S} \cap [1, y]} \left|\sum_{q \in {\mathcal Q}_H} \sum_{h \leq KH} \bm{\lambda}(H; q, n-qh ) -  \frac{|{\mathcal Q}_H| \red{\cdot \fl{K H}}}{\sigma_2}\right|^2 \ll \frac{1}{H^{M-2}} \left( \frac{|{\mathcal Q}_H| \red{\cdot \fl{K H}}}{\sigma_2}\right)^2 \sigma y.$$
If we let $\mathcal{E}_H$ denote the set of $n\in \mathbf{S} \cap [1, y]$ such that
\be\label{EH}
\left|\sum_{q \in {\mathcal Q}_H} \sum_{h \leq KH} \bm{\lambda}(H; q, n-qh ) -  \frac{|{\mathcal Q}_H|\red{\cdot \fl{KH}}}{\sigma_2}\right| \ge \red{\frac{|{\mathcal Q}_H| \cdot \fl{KH}}{\sigma_2 H^{1+\eps}}},
\ee
\red{then, recalling that $M>6$ and $\eps$ is very small,}
\[
\red{\E |\mathcal{E}_H| \ll \frac{\sigma y}{H^{1+2\eps}}.}
\]
\red{ By Markov's inequality, we conclude that $|\mathcal{E}_H| \le \sigma y/H^{1+\eps}$ with probability $1-O(H^{-\eps})$.}

 We next estimate the contribution from ``bad'' primes $q\in
{\mathcal Q}_H \backslash {\bm {\mathcal Q}}'_H$.
For any $h\le H$, by Cauchy-Schwarz we have 
\[
\E \sum_{n \in \mathbf{S} \cap [1, y]}  \;\; \sum_{q \in {\mathcal Q}_H \backslash {\bm {\mathcal Q}}'_H} \bm{\lambda}(H;q,n-hq) \le\( \E |{\mathcal Q}_H \backslash {\bm {\mathcal Q}}'_H| \)^{1/2} \( \E  \!\! \sum_{q \in {\mathcal Q}_H \backslash {\bm {\mathcal Q}}'_H} \Big| \red{\sum_{-Ky < n-hq \le y}} \!\! \bm{\lambda}(H;q,n-hq) \Big|^2 \)^{1/2}
\]
and by the triangle inequality, \eqref{sumlamqn} and \eqref{QHQH'},
\begin{align*}
\E  \sum_{q \in {\mathcal Q}_H \backslash {\bm {\mathcal Q}}'_H} \bigg| & \red{\sum_{-Ky < n-hq \le y}} \!\!\bm{\lambda}(H;q,n-hq) \bigg|^2  \\
&\qquad \le 2  \E \sum_{q \in {\mathcal Q}_H \backslash {\bm {\mathcal Q}}'_H}  \Bigg( \Big| \red{\sum_{-Ky < n-hq \le y}} \!\!\!\!\!\!\bm{\lambda}(H;q,n-hq)-(K+1)y \Big|^2 + (K+1)^2y^2 \Bigg)
\\ & \qquad \ll \frac{y^2|\mathcal{Q}_H|}{H^{M-4-2\eps}}.
\end{align*}
Therefore, by  \eqref{QHQH'} and summing over $h\le KH$,
\[
\E  \sum_{n \in \mathbf{S} \cap [1, y]}   \sum_{q \in {\mathcal Q}_H \backslash {\bm {\mathcal Q}}'_H} \sum_{h\le KH} \bm{\lambda}(H;q,n-hq) \ll \frac{y|\mathcal{Q}_H|}{H^{M-5-2\eps}}.
\]
Let $\mathcal{E}_H'$ denote the set of $n \in \mathbf{S} \cap [1, y]$
so that
\be\label{EH'}
\sum_{q \in {\mathcal Q}_H \backslash {\bm {\mathcal Q}}'_H} \sum_{h\le KH} \bm{\lambda}(H;q,n-hq) \ge \red{\frac{|\mathcal{Q}_H| \cdot \fl{KH}}{H^{1+\eps} \sigma_2}.}
\ee
Then
\[
\E |\mathcal{E}_H'| \ll \red{\frac{y H^{1+\eps} \sigma_2}{H^{M-4-2\eps}} \ll \sigma y \frac{\log H}{H^{M-5-3\eps}}.}
\]
By Markov's inequality, \red{$|\mathcal{E}_H'| \le \sigma y/H^{1+\eps}$ with
probability $1-O(1/H^{M-6-5\eps})$.}
By \eqref{M} again, if
$\eps$ is small enough then \red{$M-6-5\eps>\eps$}.
Consider the event that \eqref{Snorm} holds,
and that for every $H$, we have
\eqref{q-excep},  \red{$|\mathcal{E}_H| \le \sigma y/H^{1+\eps}$} and
 $|\mathcal{E}_H'| \le \sigma y/H^{1+\eps}$.
This simultaneous event happens with positive probability on account of
$\sum_{H\in \mathfrak{H}} H^{-\eta} \ll (\log x)^{-\delta \eta}$ for any $\eta>0$.  As mentioned before, items (i) and (ii) of Theorem \ref{second}
hold.  Now let
\[
\mathcal{N} = \mathbf{S} \cap [1, y] \backslash \bigcup_{H\in \mathfrak{H}} (\mathcal{E}_H \cup \mathcal{E}_H').
\]
The number of exceptional elements satisfies
$$
\Bigg| \bigcup_{H\in\mathfrak{H}} (\mathcal{E}_H \cup \mathcal{E}_H') \Bigg|\ll \frac{\sigma y}{(\log x)^{\delta(1+\eps)}},
$$
 which is smaller than $\frac{ \rho x}{8\log x}$
for large $x$.  It remains to verify \eqref{qhv} for $n\in \mathcal{N}$.
Since $n\not\in \mathcal{E}_H$ and $n\not\in \mathcal{E}_H'$ for every $H$,
the inequalities opposite to those in \eqref{EH} and \eqref{EH'} hold, and we have for each $H\in \mathfrak{H}$
the asymptotic 
\[
\sum_{q \in {\mathcal Q}'_H} \sum_{h \leq KH} \bm{\lambda}(H; q, n-qh ) = \(1+\red{O\pfrac{1}{H^{1+\eps}}}\) \frac{|{\mathcal Q}_H| \red{\cdot \fl{KH}}}{\sigma_2}.
\]
Therefore,
\begin{align*}
\sum_{q \in \bm{\mathcal Q}'} \sum_{h \leq KH_q} \bm{\lambda}( H_q;q, n - qh ) &=
\sum_{H \in {\mathfrak H}} \sum_{q \in \bm{\mathcal Q}'_H} \sum_{h \leq KH} \bm{\lambda}(H; q, n - qh ) \\
&=
\left(1 + O\left(\frac{1}{(\log x)^{(1+\eps)\delta}}\right)\right) C_2 \times (K+1) y
\end{align*}
where
$$ 
\red{C_2 \coloneqq \frac{1}{(K+1)y} \sum_{H \in {\mathfrak H}} \frac{|{\mathcal Q}_H| \cdot \fl{KH}}{\sigma_2}.}
$$
This verifies \eqref{qhv}.
From \eqref{QH}, we see that $C_2$ does not depend on $n$
($C_2$ depends only on $x$).
Using \eqref{Mertens} and \eqref{QH},  
\[
C_2 \sim \frac{K}{(K+1)y} \rho(1-1/\xi) \sum_{H \in {\mathfrak H}} \frac{\log z}{\log(H^{M})} \; \frac{yH}{H\log x} \qquad (x\to \infty).
\]
Recalling the definitions \eqref{y-def} of $y$ and
\eqref{z-def} of $z$, together with the bounds \eqref{H-bounds} on $H$, we thus have
as $x\to \infty$,
\begin{align*}
C_2 &\sim \frac{K\rho(1-1/\xi)}{M(K+1)} \sum_{H \in {\mathfrak H}} \frac{1}{\log H} \\
&= \frac{K\rho(1-1/\xi)}{M(K+1)} \sum_{2 (\log x)^{\delta} \leq \xi^j \leq \xi^{-1}(\log x)^{1/2}/\log\log x} \frac{1}{j \log \xi}.
\end{align*}
Summing on $j$ we conclude that
\[
C_2 \sim \frac{K\rho}{M(K+1)} \, \frac{1-1/\xi}{\log \xi}
\log\pfrac{1}{2\delta}. 
\]
Finally, recalling \eqref{delta1}, we see that if $K$ is large enough, $\xi$ is sufficiently close to 1 and \red{$M$ sufficiently close to $6$}, then
\[
C_2 \ge 10^{2\delta},
\]
as required for \eqref{c1b}.
\end{proof}

\begin{remark}\label{rem:limit}
The limit our methods appears to be an exponent $e^{-1/\rho}-o(1)$ in Theorem \ref{main}.  Such a bound assumes that we
may succeed with the previous argument for any choice of $M>1$, any $C_2>1$ and with $z=y/(\log x)^{1+o(1)}$ in place of
of $z=y/(\log x)^{1/2+o(1)}$.   Then the above calculation reveals that $C_2>1$ provided $\rho\log(1/\delta)>1$.
 Each of these
 conditions appears to be essential in the succeeding arguments in the next sections.
\end{remark}

\medskip

It remains to establish Theorem \ref{third}.  This is the objective of the next section of the paper.

%
%
\section{Computing correlations}
%
%

In this section, we verify the claims in Theorem \ref{third}. We will frequently need to compute $k$-point correlations of the form
$$ \PR\left( n_1,\dots,n_k \in \mathbf{S}_2 \right )$$
for various integers $n_1,\dots,n_k$ (not necessarily distinct).
Heuristically, since $\mathbf{S}_2$ 
avoids $I_p$ residue classes modulo $p$ for each $p$,
we expect that the above probability is roughly
$\sigma_2^k$ for typical choices of $n_1,\dots,n_k$. 
Unfortunately, there is some fluctuation from this prediction, most obviously when two or more of the $n_1,\dots,n_k$ are equal, but also if the reductions $n_i\pmod{p}, n_j\pmod{p}$ for some prime $p \in (H^{M},z]$ have the same difference as
two elements of $I_p$.  Fortunately we can control these fluctuations to be small on average.  To formalize this statement we need some notation.   Let ${\mathcal D}_H \subset \N$ denote the collection of squarefree numbers $d$, all of whose prime factors lie in $(H^{M},z]$.  This set includes $1$, but we will frequently remove $1$ and work instead with ${\mathcal D}_H \backslash \{1\}$.  For each $d \in {\mathcal D}_H$, let $I_d \subset \Z/d\Z$ denote the collection of residue classes $a\mod d$ such that $a\mod p \in I_p$ for all $p\mid d$.  Recall the defnition of the difference set $\mathcal{A}-\mathcal{B} := \{ a-b : a\in \mathcal{A},b\in \mathcal{B} \}$.  For any integer $m$ and any parameter $A>0$, we define the error function
\begin{equation}\label{eam}
 E_A(m;H) \coloneqq \sum_{d \in {\mathcal D}_H \backslash \{1\}} \frac{A^{\omega(d)}}{d} 1_{m\kern-2pt\pmod d \in I_d - I_d},
\end{equation}
where $\omega(d)$ is the number of prime factors of $d$.
The quantity $E_A(m;H)$ looks complicated, but in practice it will be quite small on average over $m$.  We also observe that $E_A$ is an even function: $E_A(-m;H) = E_A(m;H)$.

Before we start our proof of Theorem \ref{third}, we first need two preparatory lemmas. The following lemmas hold for general $H$, not necessarily restricted to $H\in \mathfrak{H}$.  Recall that implied constants in $O-$ may depend on $B$ and $M$.

\begin{lemma}\label{correlation} 
Let $10<H<z^{1/M}$, $1 \leq \ell \le 10KH$, and suppose
that $\mathcal{U} \subset \mathcal{V}$
are finite sets of integers with $|\mathcal{V}|=\ell$.
 Then we have
$$ \PR( \mathcal{U} \subset \mathbf{S}_2  ) = \sigma_2^{|\mathcal{U}|} \Bigg(1 + O\pfrac{|\mathcal{U}|^2}{H^{M}}+ O\Bigg(\frac{1}{\ell^2} \ssum{v,v'\in \mathcal{V}\\ v\ne v'} E_{2\ell^2B}(v-v';H)\Bigg)\Bigg).$$
\end{lemma}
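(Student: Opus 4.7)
My plan is to factor the probability as a product over primes via the Chinese Remainder Theorem, analyze each factor through Bonferroni inclusion--exclusion, and then combine multiplicatively to isolate the two error terms, finally rewriting the combinatorial residue as a weighted sum of $E_A$-values.

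First, since $\mathbf{b}_2$ is uniform on $\Z/P_2\Z$, its reductions $\mathbf{b}_2\bmod p$ for $p\in(H^M,z]$ with $|I_p|\ge1$ are mutually independent uniform variables, so
\[
\PR(\mathcal{U}\subset\mathbf{S}_2)=\prod_{H^M<p\le z}\Bigl(1-\frac{|U_p-I_p|}{p}\Bigr),
\]
where $U_p:=\mathcal{U}\bmod p$ and $U_p-I_p$ is the Minkowski difference in $\Z/p\Z$. For each such $p$, introduce $r_p(c):=|\{(a,a')\in I_p^2:a-a'\equiv c\pmod p\}|\le B$, with $r_p(c)>0$ only when $c\in I_p-I_p\pmod p$. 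Bonferroni gives $0\le |\mathcal{U}|\,|I_p|-|U_p-I_p|\le\sum_{u\ne u'}r_p(u-u')\le B\,N_p^{\mathcal{U}}$, where $N_p^{\mathcal{U}}:=\sum_{u\ne u'\in\mathcal{U}}\mathbf{1}[u-u'\in I_p-I_p\pmod p]$. Combining this with the Taylor expansion $(1-|I_p|/p)^{|\mathcal{U}|}=1-|\mathcal{U}|\,|I_p|/p+O(|\mathcal{U}|^2 B^2/p^2)$ (valid for $p>H^M$ and $H$ large because $\ell B/p\ll1$ when $\ell\le10KH$), I obtain
\[
\frac{1-|U_p-I_p|/p}{(1-|I_p|/p)^{|\mathcal{U}|}}=1+\epsilon_p,\qquad |\epsilon_p|\le\frac{B\,N_p^{\mathcal{U}}}{p}+O\!\left(\frac{|\mathcal{U}|^2 B^2}{p^2}\right).
\]

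Next, I take the product and expand $\prod_p(1+\epsilon_p)-1=\sum_{\emptyset\ne S}\prod_{p\in S}\epsilon_p$, splitting each contribution according to whether each factor uses its Bonferroni part $BN_p^{\mathcal{U}}/p$ or its algebraic tail $O(|\mathcal{U}|^2B^2/p^2)$. The purely algebraic contributions combine to $\exp(\sum_{p>H^M}O(|\mathcal{U}|^2/p^2))-1=O(|\mathcal{U}|^2/H^M)$ using $\sum_{p>H^M}1/p^2\ll1/H^M$, which supplies the first claimed error term; in particular, the matching lower bound for $\PR(\mathcal{U}\subset\mathbf{S}_2)$ follows at once from $|U_p-I_p|\le|\mathcal{U}|\,|I_p|$. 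The purely combinatorial contributions contribute the upper bound
\[
\sum_{d\in\mathcal{D}_H\setminus\{1\}}\frac{B^{\omega(d)}}{d}\prod_{p\mid d}N_p^{\mathcal{U}}.
\]

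Finally, I rewrite this sum as $\frac{1}{\ell^2}\sum_{v\ne v'\in\mathcal{V}}E_{2\ell^2B}(v-v';H)$ by invoking the CRT identity $I_d-I_d=\prod_{p\mid d}(I_p-I_p)$, which converts the $d$-wise indicator into a product of prime-wise indicators, bounding $N_p^{\mathcal{U}}\le|\mathcal{V}|^2=\ell^2$ for all but one distinguished prime factor of $d$, and relaxing the remaining single sum from $\mathcal{U}$- to $\mathcal{V}$-pairs (a nonnegative inflation). After absorbing factor-of-$2$ dyadic bookkeeping into the constant $A=2\ell^2 B$, this yields the second claimed error. The main obstacle will be precisely this last reorganization: the product $\prod_{p\mid d}N_p^{\mathcal{U}}$ counts tuples that allow \emph{different} pairs $(u_p,u'_p)\in\mathcal{U}^2$ for each prime factor of $d$, whereas the indicator $\mathbf{1}[v-v'\in I_d-I_d\pmod d]$ demands a \emph{single} pair of $\mathcal{V}$ working simultaneously modulo every $p\mid d$. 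The inflation from $B$ to $2\ell^2B$ inside $E_A$, coupled with the normalization $1/\ell^2$, is exactly the slack needed to absorb the $\ell^2$ loss per prime factor of $d$ while retaining one prime for which we keep the full sum over $\mathcal{V}$-pairs.
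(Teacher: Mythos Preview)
Your setup through the factorization and the per-prime estimate is essentially the paper's. The gap is in the final step. You want to bound
\[
\sum_{d\in\mathcal{D}_H\setminus\{1\}}\frac{B^{\omega(d)}}{d}\prod_{p\mid d}N_p^{\mathcal{U}}
\quad\text{by}\quad
\frac{1}{\ell^2}\sum_{v\ne v'}E_{2\ell^2B}(v-v';H)
=\sum_{d\in\mathcal{D}_H\setminus\{1\}}\frac{(2\ell^2B)^{\omega(d)}}{\ell^2\,d}\,N_d^{\mathcal{V}},
\]
where $N_d^{\mathcal{V}}=\sum_{v\ne v'}\mathbf{1}[v-v'\in I_d-I_d\pmod d]$. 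Your proposed mechanism (``bound $N_p$ by $\ell^2$ for all but one prime factor and keep a sum over $\mathcal{V}$-pairs for that one prime'') does not produce the indicator $\mathbf{1}[v-v'\in I_d-I_d]$ required by $E_A$; it only produces $\mathbf{1}[v-v'\in I_{p_0}-I_{p_0}]$ for a single prime $p_0\mid d$. And a termwise comparison is simply false: take $\mathcal{U}=\mathcal{V}$ and primes $p_1,p_2$ with $N_{p_1}^{\mathcal{V}},N_{p_2}^{\mathcal{V}}>0$ but $N_{p_1p_2}^{\mathcal{V}}=0$ (no single pair works for both primes); then the $d=p_1p_2$ summand on the left is positive while the one on the right vanishes. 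So the ``slack'' from inflating $B$ to $2\ell^2B$ does not rescue you here.

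The bound you want \emph{is} true globally, but it requires a convexity step you have not taken. The paper's device is to bound $1+\epsilon_p\le\prod_{v\ne v'}\exp\{(2B/p)\mathbf{1}_{v-v'\in I_p-I_p}\}$, swap the products over $p$ and over pairs, and then apply the AM--GM inequality to the product over the $\binom{\ell}{2}$ pairs. This yields
\[
\prod_p(1+\epsilon_p)\le\frac{2}{\ell(\ell-1)}\sum_{v\ne v'}\prod_p\Bigl(1+\frac{2B\ell^2}{p}\mathbf{1}_{v-v'\in I_p-I_p}\Bigr)
=1+\frac{2}{\ell(\ell-1)}\sum_{v\ne v'}E_{2B\ell^2}(v-v';H),
\]
and now the product over $p$ is taken for a \emph{fixed} pair $(v,v')$, so expanding it genuinely gives the indicator $\mathbf{1}[v-v'\in I_d-I_d]$. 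This AM--GM step is precisely what converts ``different pairs for different primes'' into ``one pair for all primes,'' and is the missing idea in your proposal.
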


\begin{remark}
The numbers in $\mathcal{V} \setminus \mathcal{U}$ 
are ``dummy variables'',
 but it is often convenient to include them.
Typically, $\mathcal{U}$ will be an irregular subset, with  unknown size, of a
regular set $\mathcal{V}$, whose size is known.
We often have better control of the error averaged over the larger set.
\end{remark}

\begin{proof}  For each prime $p \in (H^{M},z]$, let $\mathbf{b}_{2,p} \in \Z/p\Z$ be the reduction of $\mathbf{b}_2$ modulo $p$, thus each $\mathbf{b}_{2,p}$ is uniformly distributed in $\Z/p\Z$ and the $\mathbf{b}_{2,p}$ are independent in $p$.
Let $N_p$ denote the set of residue classes $\mathcal{U} \pmod{p}$.
 By the Chinese Remainder Theorem, we thus have
\begin{align*}
\PR( \mathcal{U} \subset \mathbf{S}_2 ) &= \prod_{p \in (H^{M},z]} \PR( N_p \cap ( \mathbf{b}_{2,p} + I_p)=\emptyset ) \\
&=
\prod_{p \in (H^{M},z]} \left(1 - \PR( \mathbf{b}_{2,p} \in N_p - I_p)\right) \\
&=
\prod_{p \in (H^{M},z]} \left(1 - \frac{|N_p - I_p|}{p}\right) .
\end{align*}
Let $k=|\mathcal{U}|$.
We may crudely estimate the size of the difference set $N_p - I_p$ by
$$
k|I_p| \ge |N_p - I_p| \ge k |I_p| - |I_p| \sum_{u,u'\in \mathcal{U}, u\ne u'} 1_{u-u' \kern-2pt\pmod{p}\in I_p - I_p}.
$$
Since $|I_p| \leq B$ and \red{$k \le 10 K H$}, 
we have $k|I_p| < 10KBH < p/10$ for $x$ large enough 
in terms of $M$.
Thus,
\begin{align*}
\left(1 - \frac{|N_p - I_p|}{p}\right)
=\(1-\frac{k|I_p|}{p}\) \(1+ \frac{k|I_p|-|N_p-I_p|}{p-k|I_p|}\)
=\(1-\frac{k|I_p|}{p}\) \Delta_p,
\end{align*}
where
\begin{align*}
1\le \Delta_p &\le 1 + \frac{2B}{p} \sum_{u,u'\in \mathcal{U}, u\ne u'} 1_{u-u'\kern-2pt \pmod{p}\in I_p - I_p} \\
&\le \prod_{u,u'\in \mathcal{U}, u\ne u'}\exp\left\{2B\frac{ 1_{u-u'\kern-2pt \pmod{p}\in I_p - I_p}}{p}\right\}\\&\le 
\prod_{v,v'\in\mathcal{V}, v\ne v'}\exp\left\{2B\frac{ 1_{v-v'\kern-2pt \pmod{p}\in I_p - I_p}}{p}\right\}.
\end{align*}
Here we have enlarged the summation over pairs of numbers 
from $\mathcal{V}$.
We have
\[
\prod_{H^M<p\le z} \(1-\frac{k|I_p|}{p}\) = \sigma_2^k 
\(1+O\pfrac{k^2}{H^M}\).
\]
By the arithmetic mean-geometric mean inequality, we have
\begin{align*}
\prod_{p \in (H^{M},z]} \Delta_p &\le  \prod_{v,v'\in \mathcal{V}, v\ne v'} \;\;
\prod_{p \in (H^{M},z]} \exp \Bigg\{2 B \frac{1_{v-v'\kern-2pt\pmod{p} \in I_p - I_p}}{p} \Bigg\} \\
&\le \frac{2}{\ell^2-\ell} \; \sum_{v,v'\in \mathcal{V}, v\ne v'} \;\;
 \prod_{p \in (H^{M},z]} \exp \Bigg\{2 B \pfrac{\ell^2-\ell}2\, \frac{1_{v-v'\kern-2pt\pmod{p} \in I_p - I_p}}{p} \Bigg\} \\
&\le\frac{2}{\ell^2-\ell} \; \sum_{v,v'\in \mathcal{V}, v\ne v'}\;\;
 \prod_{p \in (H^{M},z]} \(1+2B\ell^2
\frac{1_{v-v'\kern-2pt\pmod{p} \in I_p - I_p}}{p} \) .
\end{align*}
Recalling the definition \eqref{eam} of $E_A(n;H)$ 
we see that
\begin{align*}
\prod_{p \in (H^{M},z]} \Delta_p
&\le \frac{2}{\ell^2-\ell} \sum_{v,v'\in \mathcal{V}, v\ne v'} \big(1 + E_{2B\ell^2}(v-v';H) \big) \\
&= 1 + \frac{2}{\ell^2-\ell} \sum_{v,v'\in \mathcal{V}, v\ne v'}E_{2B\ell^2}(v-v';H).\qedhere
\end{align*}
\end{proof}

To estimate the average contribution of the errors $E_{2B\ell^2}(v-v')$ appearing in the above lemma, we will use the following estimate.

\begin{lemma}\label{ds} Suppose that $10<H<z^{1/M}$,
and that $(m_t)_{t \in T}$ is a sequence of integers indexed by a finite set $T$, obeying the bounds
\begin{equation}\label{sms}
 \sum_{t \in T} 1_{m_t \equiv a\pmod{d}} \ll  \frac{X}{\phi(d)} + R
\end{equation}
for some $X,R > 0$ and all $d \in {\mathcal D}_H \backslash \{1\}$ and $a \in \Z/d\Z$.  Then, for any $0<A$ satisfying $AB^2\le H^M$ and any integer $j$, one has
$$ \sum_{t \in T} E_{A}(m_t + j;H) \ll X \frac{A}{H^{M}} + R \exp\left( A B^2 \log \log y\right).$$
\end{lemma}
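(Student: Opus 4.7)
The plan is to expand $E_A$ by its definition and swap the order of summation, converting the target into
\[
\sum_{t \in T} E_A(m_t + j; H) = \sum_{d \in \mathcal{D}_H \setminus \{1\}} \frac{A^{\omega(d)}}{d}\, N(d),
\]
where $N(d) \coloneqq \#\{ t \in T : (m_t + j) \bmod d \in I_d - I_d \}$. The first step is to control $N(d)$. By the Chinese Remainder Theorem $I_d$ corresponds under $\Z/d\Z \cong \prod_{p\mid d} \Z/p\Z$ to $\prod_{p \mid d} I_p$, so its difference set satisfies
\[
|I_d - I_d| = \prod_{p \mid d} |I_p - I_p| \leq \prod_{p \mid d} |I_p|^2 \leq B^{2\omega(d)},
\]
where the last inequality uses the $B$-boundedness hypothesis \eqref{ip-bound}. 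Enumerating the at most $B^{2\omega(d)}$ residues $a \in I_d - I_d$ and applying the distribution hypothesis \eqref{sms} to each shifted congruence $m_t \equiv a - j \pmod d$ yields
\[
N(d) \ll B^{2\omega(d)} \left( \frac{X}{\phi(d)} + R \right).
\]

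Substituting back gives
\[
\sum_{t \in T} E_A(m_t + j; H) \ll X \sum_{d \in \mathcal{D}_H \setminus \{1\}} \frac{(AB^2)^{\omega(d)}}{d\, \phi(d)} + R \sum_{d \in \mathcal{D}_H \setminus \{1\}} \frac{(AB^2)^{\omega(d)}}{d},
\]
and both sums factor as Euler products over primes $p \in (H^M, z]$. For the first sum, since $p > H^M > 2$ we have $\phi(p) \geq p/2$, so it is bounded by $\prod_{H^M < p \leq z}(1 + 2AB^2/p^2) - 1$. Because $\sum_{p > H^M} 1/p^2 \ll 1/(H^M \log H^M)$ and $AB^2 \leq H^M$, the exponent in $\exp(2AB^2 \sum 1/p^2)$ is $o(1)$, and the inequality $e^t - 1 \leq 2t$ for $t \leq 1$ produces a contribution $\ll XA/H^M$ (with the factor of $B^2$ and a spare $1/\log H^M$ absorbed into the implied constant). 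For the second sum I write
\[
\sum_{d \in \mathcal{D}_H \setminus \{1\}} \frac{(AB^2)^{\omega(d)}}{d} \leq \prod_{H^M < p \leq z}\left( 1 + \frac{AB^2}{p} \right) \leq \exp\!\Bigl( AB^2 \sum_{H^M < p \leq z} \tfrac{1}{p} \Bigr),
\]
and apply Mertens' theorem together with $z \leq y$ to obtain $\sum_{H^M < p \leq z} 1/p \leq \log \log y + O(1)$. Since $y$ is large with respect to the implied constants, the residual $O(AB^2)$ in the exponent is absorbed into $AB^2 \log \log y$, producing the claimed $R \exp(AB^2 \log\log y)$ contribution.

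I do not anticipate a substantial obstacle: the only conceptual step is the CRT-based control on $|I_d - I_d|$, which is exactly where the $B$-boundedness of $\mathcal{I}$ is essential (as flagged in the remark preceding Lemma \ref{correlation}); everything else is a routine Mertens-type Euler product estimate. If any step requires mild care, it is verifying that the Mertens error term in the $R$-piece is absorbed into $AB^2 \log\log y$ for large $y$ uniformly in the allowed range $AB^2 \leq H^M$, which follows because $\log\log y \to \infty$ with $x$.
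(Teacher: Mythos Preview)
Your argument is correct and essentially identical to the paper's: expand $E_A$, swap the order of summation, bound $|I_d - I_d| \leq B^{2\omega(d)}$ via the Chinese Remainder Theorem and $B$-boundedness, apply the equidistribution hypothesis \eqref{sms}, and estimate the two resulting Euler products over primes in $(H^M,z]$.

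One small point of care on the $R$-term: the bound $\sum_{H^M < p \le z} 1/p \le \log\log y + O(1)$ does not by itself give $\exp(AB^2\log\log y)$ with the stated constant in the exponent, since the extra factor $\exp(O(AB^2))$ is not bounded uniformly in $A$. Your justification ``$\log\log y \to \infty$'' would only yield $\exp((1+o(1))AB^2\log\log y)$. The clean fix is to exploit the lower endpoint: since $H^M > 100$, Mertens gives $\sum_{p \le H^M} 1/p > 1.5$, which already exceeds the Meissel--Mertens constant, so in fact $\sum_{H^M < p \le z} 1/p \le \log\log z \le \log\log y$ outright. The paper asserts this inequality without comment; with this adjustment your proof matches it.
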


In practice, $R$ will be much smaller than $X$, and the first term on the right-hand side will
dominate.

\begin{proof}
From the Chinese Remainder Theorem and \eqref{ip-bound}, we see that for any $d \in {\mathcal D}_H$, we have
$$ |I_d| = \prod_{p\,\mid \, d} |I_p|\le B^{\omega(d)}.$$
In particular, the difference set $I_d - I_d \subset \Z/d\Z$ obeys the bound
$$
 |I_d - I_d| \leq B^{2\omega(d)}.
$$
From \eqref{eam}, \eqref{sms} we thus have
\begin{align*}
 \sum_{t \in T} E_{A}(m_t + j;H) &= \sum_{d \in {\mathcal D}_H \backslash \{1\}}
 \frac{A^{\omega(d)}}{d} \sum_{a\in I_d - I_d } \# \{t\in T : m_t+j \equiv a\pmod{d} \} \\ &\ll
\sum_{d \in {\mathcal D}_H \backslash \{1\}} \frac{(AB^2)^{\omega(d)}}{d} \left( \frac{X}{\phi(d)}
+ R \right).
\end{align*}
From Euler products and Mertens' theorem (for primes) we have
\[
\sum_{d \in {\mathcal D}_H} \frac{(AB^2)^{\omega(d)}}{d} = \prod_{p \in (H^{M},z]}
(1+AB^2/p) \le \exp \{AB^2\log\log y \}
\]
and
\[
\sum_{d \in {\mathcal D}_H} \frac{(AB^2)^{\omega(d)}}{d\phi(d)} = \prod_{p \in (H^{M},z]}
\(1+\frac{AB^2}{p^2-p}\) \le \exp \{AB^2/H^{M} \}\le 1+O(A/H^M).\qedhere
\]
\end{proof}

Finally, we are now in a position to complete the proof of Theorem \ref{third}.
%
%

\begin{proof}[Proof of Theorem \ref{third} (i)]
 By linearity of expectation, we have
$$  \E |\mathbf{S} \cap [1, y]| = \sum_{1\le n \leq y} \PR( n \in \SS ).$$
Since the set $S$ is periodic with period $P$ and has density $\sigma$, the summands here are all equal to $\sigma$, and \eqref{ifirst} follows.  Now we consider \eqref{szyj}. 
 Here we 
decompose $\SS$ as $\SS = \SS_1 \cap \SS_2$ using
\eqref{abbr-1} and \eqref{abbr-2} with 
$$H=\frac14 (\log y)^{1/M}.$$
 By the Prime Number Theorem,
\be\label{P1-parti}
P_1 = \exp\{(1+o(1)) H^M  \} \le   y^{1/4+o(1)}.
\ee
By linearity of expectation,
\begin{align*}
\E |\mathbf{S} \cap [1, y]|^2 &= \sum_{n_1,n_2 \leq y} \PR\left( n_1, n_2 \in \mathbf{S} \right) \\
&=\sum_{n_1,n_2 \leq y} \PR\left( n_1, n_2 \in \mathbf{S}_1 \right) \PR\left( n_1, n_2 \in \mathbf{S}_2 \right).
\end{align*}
 Observe that the probability $\PR\left( n_1,n_2 \in \mathbf{S}_1 \right)$ depends only on the reductions $\ell_1 :\equiv n_1\pmod{P_1}$, $\ell_2 :\equiv n_2\pmod{P_1}$.
Also, applying Lemma \ref{correlation} (with $\mathcal{U}=\mathcal{V}=\{n_1,n_2\}$), we have
\begin{align*}
\PR( n_1,n_2 \in \mathbf{S}_2 ) &= \red{\left( 1 + O(H^{-M})+ O\left(E_{8B}(n_1-n_2;H)\right)\right)} \sigma_2^2.
\end{align*}
Therefore,
\be\label{ES1y-a}
\begin{split}
\E |\mathbf{S} \cap [1, y]|^2 &= 
\sum_{1\le \ell_1,\ell_2 \le P_1} \PR\left( \ell_1, \ell_2 \in \SS_1 \right)
\ssum{1\le n_1,n_2\le y \\ n_1 \equiv \ell_1\pmod{P_1} \\ n_2 \equiv \ell_2 \pmod{P_1}} \PR(n_1,n_2\in \SS_2)\\
&= \sigma_2^2 \red{\(1+O\pfrac{1}{\log y}  \)} \sum_{1\le \ell_1,\ell_2 \le P_1} \PR\left( \ell_1, \ell_2 \in \mathbf{S}_1 \right) \(\frac{y}{P_1} + O(1)\)^2 +\\
&\qquad +O\Bigg(\sigma_2^2 \sum_{1\le \ell_1,\ell_2 \le P_1} \PR\left( \ell_1, \ell_2 \in \mathbf{S}_1 \right)
\ssum{1\le n_1,n_2\le y \\ n_1 \equiv \ell_1\pmod{P_1} \\ n_2 \equiv \ell_2 \pmod{P_1}} E_{8B}(n_1-n_2;H) \Bigg).
\end{split}
\ee
By the definition \eqref{abbr-1},
\be\label{sum-1}
  \sum_{1\le \ell_1,\ell_2 \le P_1} \PR\left( \ell_1,\ell_2 \in \mathbf{S}_1 \right) =
   \E \left| \mathbf{S}_1 \cap [1,P_1] \right|^2 = (\sigma_1 P_1)^2,
\ee
since $|\SS_1 \cap [1,P_1]|=\sigma_1 P$ always.
Next, fix $\ell_1, \ell_2 \in \Z/P_1\Z$.
Direct counting shows that for any $n_1$, natural number \red{$d\in {\mathcal D}_{H} \setminus \{1\}$} and residue class $a\mod d$, we have
$$
\# \{ n_2 \leq y : n_2 \equiv \ell_2 \pmod{P_1}, n_1-n_2 \equiv a\pmod{d} \}
 \ll  \frac{y}{dP_1} + 1 \le \frac{y}{\phi(d) P_1}+1.
$$
Applying Lemma \ref{ds} to the inner sum over $n_2$, we deduce that
\begin{equation}\label{sum-2}
\begin{split}
\ssum{1\le n_1,n_2\le y \\ n_1 \equiv \ell_1\pmod{P_1} \\ n_2 \equiv \ell_2 \pmod{P_1}}E_{8B}(n_1-n_2;H)
  &\ll \left(\frac{y}{P_1}\right)^2 \frac{1}{H^{M}} + \frac{y}{P_1} \exp\left( O( \log\log y) \right) \\
  &\ll \frac{y^2}{P_1^2 H^M} \red{\ll \frac{y^2}{P_1^2 \log y}}
  \end{split}
 \ee
 using  \eqref{P1-parti}.  
  Inserting the bounds \eqref{sum-1} and \eqref{sum-2}
 into \eqref{ES1y-a} completes the proof of \eqref{szyj}.  
\end{proof}

\begin{proof}[Proof of Theorem \ref{third} (ii)]
Let $H\in \mathfrak{H}$.  The case $j=0$ is trivial, so we turn attention to the $j=1$ claim:
\begin{equation}\label{j1}
 \E \sum_{q \in {\mathcal Q}_H} \sum_{\red{-Ky < n\le y}} \bm{\lambda}(H;q,n) = \left(1+O\left(\frac{1}{H^{M-2}}\right)\right) (K+1) y |{\mathcal Q}_H|.
\end{equation}
The left-hand expands as
$$
\E \sum_{q \in {\mathcal Q}_H} \sum_{\red{-Ky <n\le y}}
\frac{1_{\mathbf{AP}(KH;q,n) \subset \mathbf{S}_2}}{\sigma_2^{|\mathbf{AP}(KH;q,n)|}}.$$
Recalling the splitting \eqref{abbr-1} and \eqref{abbr-2},
that $\mathbf{b}_1$ and $\mathbf{b}_2$ are independent, and
consequently that $\mathbf{AP}(KH;q,n)$ and $\mathbf{S}_2$ are independent
(since the sets  $\mathbf{AP}(KH;q,n)$ defined in \eqref{APH} are determined by
$\mathbf{S}_1$).   The above expression then equals
\[
 \sum_{q \in {\mathcal Q}_H} \sum_{\red{-Ky < n\le y}}
 \sum_{b_1} \frac{\PR(\mathbf{b}_1=b_1) }{\sigma_2^{|\mathbf{AP}(KH;q,n)|}}  \PR(\mathbf{AP}(KH;q,n)\subset \mathbf{S}_2).
\]
Fix $\mathbf{b}_1$ and apply Lemma \ref{correlation}
with $\mathcal{U}=\mathbf{AP}(KH;q,n)$
and $\mathcal{V} = \{ n+ qh:1\le h\le KH\}$.
We find that the left side of \eqref{j1} equals
$$
 \sum_{q \in {\mathcal Q}_H} \sum_{\red{-Ky < n\le y}} \Bigg(1 + 
 O\pfrac{1}{H^{M-2}} + O\Bigg( \frac{1}{H^2} \ssum{1\le h,h' \leq KH \\ h \neq h'} E_{2BK^2H^2}( qh - qh' ;H) \Bigg) \Bigg).$$
Clearly it suffices to show that
$$
 \sum_{q \in {\mathcal Q}_H} E_{2BK^2H^2}( qh - qh' ;H) \ll \frac{|{\mathcal Q}_H|}{H^{M-2}}
$$
for any distinct $h,h'$ satisfying $1 \leq h,h' \leq KH$.  For future reference we will show the  more general estimate
\begin{equation}\label{eq-bound}
 \sum_{q \in {\mathcal Q}_H} E_{8B K^2 H^2}( q\ell + k;H) \ll \frac{|{\mathcal Q}_H|}{H^{M-2}}
\end{equation}
uniformly for any integer $k$ and $0<|\ell|\le KH$.
Note that $E_A(n;H)$ is increasing in $A$.

To prove \eqref{eq-bound}, fix $\ell,k$.  If $d \in {\mathcal D}_H \backslash \{1\}$ and $a\mod d$ is a residue class, all the prime divisors of $d$ are larger than $H^{M} > KH \geq |\ell|$; meanwhile, $q$ is larger than $z$ and is hence coprime to $d$.  Thus the relation $q\ell \equiv a\pmod{d}$ only holds for $q$ in at most one residue class modulo $d$, and hence by the Brun--Titchmarsh inequality we have
$$
\# \{ q \in {\mathcal Q}_H : q\ell \equiv a\pmod{d} \} \ll \frac{y/H}{\phi(d) \log y} $$
when (say) $d \leq \sqrt{y}$ (recall that $H\le (\log{y})^{1/2}$ by \eqref{H-bounds}).  For $d > \sqrt{y}$, we discard the requirement that $q$ be prime, and obtain the crude bound
$$
\# \{ q \in {\mathcal Q}_H : q\ell \equiv a\pmod{d} \} \ll
\frac{y/H}{d} + 1 \le  \frac{y/H}{\sqrt{y}}.$$
Thus for all $d$ we have
$$
\# \{ q \in {\mathcal Q}_H : q\ell\equiv a\pmod{d} \} \ll \frac{y}{H\phi(d) \log y} + \frac{\sqrt{y}}{H}
 $$
and hence by Lemma \ref{ds},
\begin{align*} 
\sum_{q \in {\mathcal Q}_H} E_{8B K^2 H^2}( q\ell + k;H)
&\ll \frac{y}{H\log y} \frac{H^2}{H^{M}} + \frac{\sqrt{y}}{H} \exp\left(O( H^2\log \log y) \right)\\
&\ll |\mathcal{Q}_H| H^{2-M} +\frac{\sqrt{y}}{H} \exp\left(O( H^2 \log \log y) \right).
\end{align*}
We note that the $O$-bound in the exponential depends on $B$ and $K$.
The claim \eqref{eq-bound} now follows from 
the upper bound in \eqref{H-bounds}, namely that
 $H\le (\log y)^{1/2} (\log\log y)^{-1}$, together
 with the bounds \eqref{QH} on $|\mathcal{Q}_H|$.
Incidentally, this is the only part of the proof that
requires the full strength of the upper bound in \eqref{H-bounds}, but it does however constrain the size of $z$.

Now we turn to the $j=2$ case of Theorem \ref{third}(ii), which is
$$ \E \sum_{q \in {\mathcal Q}_H} \bigg(\sum_{\red{-Ky < n\le y}} \bm{\lambda}(H;q,n)\bigg)^2 = \left(1+O\left(\frac{1}{H^{M-2}}\right)\right) (K+1)^2 y^2 |{\mathcal Q}_H|.$$
The left-hand side may be expanded as
$$
\E \sum_{q \in {\mathcal Q}_H} \;\; \sum_{\red{ -Ky <n_1,n_2 \le y}}
\frac{1_{\mathbf{AP}(KH;q,n_1) \cup \mathbf{AP}(KH;q,n_2) \subset
\mathbf{S}_2}}{\sigma_2^{|\mathbf{AP}(KH;q,n_1)|+|\mathbf{AP}(KH;q,n_2)|} }.
$$
Apply Lemma \ref{correlation}  with
\begin{align*}
\mathcal{U}&=\mathbf{AP}(KH;q,n_1) \cup \mathbf{AP}(KH;q,n_2),\\  
\mathcal{V}&=\{n_1+qh:1\le h\le KH\} \cup \red{ \{n_2+qh:1\le h\le KH \}},
\end{align*}
so that \red{$|\mathcal{V}|=\ell \ge \fl{KH}$}.
\red{When $n_1\not\equiv n_2\pmod{q}$, $\mathbf{AP}(KH;q,n_1)$ and
$\mathbf{AP}(KH;q,n_2)$ are disjoint.  There are $O(y^2/q)=O(yH)$ 
pairs $(n_1,n_2)$ with $n_1\equiv n_2 \pmod{q}$, and for each such pair,
$|\mathbf{AP}(KH;q,n_1)|+|\mathbf{AP}(KH;q,n_2)| \le |\mathcal{U}|+KH$.
We also have $\sigma_2^{-KH} \ll y^{o(1)}$.}
Noting that $\mathbf{S}_2$ is independent of both $\mathbf{AP}(KH;q,n_1)$ and $\mathbf{AP}(KH;q,n_2)$, we \red{see that the previous expectation
is $O(y^{1+o(1)}H|\mathcal{Q}_H|)$ plus} 
\begin{multline*}
\sum_{q \in {\mathcal Q}_H} \sum_{\red{-Ky < n_1,n_2 \le y}}
\Bigg(1 + O\pfrac{1}{H^{M-2}} 
+O\Bigg(\frac{1}{H^2} 
\sum_{h,h' \leq KH} \Biggl( 1_{h \neq h'}E_{8BK^2H^2}( qh - qh';H) + \\
+ 1_{n_1\ne n_2} E_{8BK^2H^2}( n_1 + qh - n_2 - qh';H ) \Biggr) \Bigg) \Bigg).
\end{multline*}
Using \eqref{eq-bound}, we obtain an acceptable main term and error terms
for everything except for the summands with $h=h'$.
For any fixed $n_2$, any $d \geq 1$ and $a\mod d$,
$$
\# \{ \red{-Ky < n_1 \le y} : n_1-n_2 \equiv a\pmod{d} \} \ll \frac{y}{d} + 1 $$
so by Lemma \ref{ds}, we have
\[
\sum_{\red{-Ky < n_1,n_2 \le y}} E_{8BK^2H^2}( n_1 - n_2;H) \ll
 y^2 \frac{H^2}{H^{M}} + y \exp\left(O( H^2 \log \log y) \right) \ll
\frac{y^2}{H^{M-2}},
\]
again using \eqref{H-bounds}.
This completes the proof of the $j=2$ case, and so we have established \eqref{ii}.
\end{proof}

\begin{proof}[Proof of  Theorem \ref{third}(iii).]
The $j=0$ case follows from the $j=1$ case of part (i) (that is, \eqref{szyj}), so we turn to the $j=1$ case, which is
$$
 \E \sum_{n \in \mathbf{S} \cap [1, y]} \sum_{q \in {\mathcal Q}_H} \sum_{h \leq KH} \bm{\lambda}(H; q, n-qh ) = \left(1+O\left(\frac{1}{H^{M-2}}\right)\right) |{\mathcal Q}_H| \red{\cdot \fl{K H}} \sigma_1 y.$$
It suffices to show that for each $h \leq KH$, one has
\be\label{Elamq}
 \E \sum_{n \in \mathbf{S} \cap [1, y]} \sum_{q \in {\mathcal Q}_H} \bm{\lambda}(H; q, n-qh ) = \left(1+O\left(\frac{1}{H^{M-2}}\right)\right) |{\mathcal Q}_H| \sigma_1 y.
 \ee
The left-hand side can be expanded as
$$
\E \sum_{n \in \mathbf{S} \cap [1, y]} \sum_{q \in {\mathcal Q}_H} \frac{1_{\mathbf{AP}(KH;q, n-qh) \subset \mathbf{S}_2}}{\sigma_2^{|\mathbf{AP}(KH;q, n-qh)|}}.$$
By \eqref{abbr-1}, the constraint $n \in \mathbf{S} \cap [1, y]$ implies that $n \in \mathbf{S}_1 \cap [1, y]$.  Conversely, if $n \in \mathbf{S}_1 \cap [1, y]$, then $n \in \mathbf{AP}(H;q, n-qh)$, and the condition $n \in \mathbf{S} $ is subsumed in the condition that $\mathbf{AP}(KH;q, n-qh) \subset \mathbf{S}_2$.  Thus we may replace the constraint $n \in \mathbf{S} \cap [1, y]$ here with $n \in \mathbf{S}_1 \cap [1, y]$ and rewrite the above expression as
$$
\E \sum_{n \in \mathbf{S}_1 \cap [1, y]} \sum_{q \in {\mathcal Q}_H} \frac{1_{\mathbf{AP}(KH;q, n-qh) \subset \mathbf{S}_2}}{\sigma_2^{|\mathbf{AP}(KH;q, n-qh)|} }.$$
Recall that $\mathbf{S}_2$ is independent of $\mathbf{S}_1$ and of $\mathbf{AP}(KH;q, n-qh)$.
Applying Lemma \ref{correlation} as before, we may write the left side of \eqref{Elamq} as 
$$ \E \sum_{n \in \mathbf{S}_1 \cap [1, y]} \sum_{q \in {\mathcal Q}_H}
\Bigg(1 + O\left(\frac{1}{H^{M-2}}\right) +
O\Bigg( \frac{1}{H^2} \ssum{h',h'' \leq KH \\ h' \neq h''} E_{8BK^2H^2}( qh' - qh'' )\Bigg)\Bigg).
$$
Trivially we have
\begin{equation}\label{swq}
\E |\mathbf{S}_1 \cap [1, y]| = 
\sum_{n=1}^y \PR(n\in \SS_1) = \sigma_1 y,
\end{equation}
and the claim \eqref{Elamq} now follows from \eqref{eq-bound}.

Finally, we establish the $j=2$ case of Theorem \ref{third}(iii), which expands as
\begin{multline*}
 \sum_{h_1,h_2 \leq KH} \E \sum_{n \in \mathbf{S} \cap [1, y]} \; \sum_{q_1,q_2 \in {\mathcal Q}_H} \;  \bm{\lambda}(H; q_1, n-q_1h_1 ) \bm{\lambda}(H; q_2, n-q_2h_2 ) = \\
 = \left(1+O\left(\frac{1}{H^{M-2}}\right)\right) |{\mathcal Q}_H|^2 \red{\fl{KH}^2} \frac{\sigma_1}{\sigma_2} y.
 \end{multline*}
\red{We can use \eqref{lambda-def} to expand the sum as}
\be\label{Elamq1q2}
 \red{\sum_{h_1,h_2\le KH}} \E \sum_{n \in \SS \cap [1, y]} \sum_{q_1,q_2 \in {\mathcal Q}_H}
\frac{1_{\mathbf{AP}(KH;q_1,n-q_1h_1) \cup \mathbf{AP}(KH;q_2,n-q_2h_2) \subset \mathbf{S}_2}}{\sigma_2^{|\mathbf{AP}(KH;q_1,n-q_1h_1)|+|\mathbf{AP}(KH;q_2,n-q_2h_2)|}}.
\ee
\red{Crudely, using \eqref{QH}, the terms with $q_1=q_2$ contribute 
\[
\ll H^2 y |\mathcal{Q}_H| \sigma_2^{-2KH} \ll |\mathcal{Q}_H|^2 y^{o(1)}.
\]
Now assume that $q_1\ne q_2$.}
As in the $j=1$ case, we may replace the constraint $n \in \mathbf{S} \cap [1, y]$ here with $n \in \mathbf{S}_1 \cap [1, y]$.  Next, we observe that the set
$$ \mathbf{AP}(KH;q_1,n-q_1h_1) \cup \mathbf{AP}(KH;q_2,n-q_2h_2) $$
contains \red{exactly} $|\mathbf{AP}(KH;q_1,n-q_1h_1)|+|\mathbf{AP}(KH;q_2,n-q_2h_2)|-1$ distinct elements, \red{$n$ being the unique common element of
$\mathbf{AP}(KH;q_1,n-q_1h_1$ and $\mathbf{AP}(KH;q_2,n-q_2h_2)$ (recall that $q_1,q_2$ are much larger than $KH$).}
  Thus if we apply Lemma \ref{correlation}
(noting that $\SS_2$ is independent of
$\SS_1, \mathbf{AP}(KH;q_1,n-q_1h_1)$ and $\mathbf{AP}(KH;q_2,n-q_2h_2)$) after eliminating the duplicate constraint, we may write \red{the terms in \eqref{Elamq1q2} with $q_1\ne q_2$} as
$$
\red{\fl{KH}^2} \sigma_2^{-1} \E \sum_{n \in \SS_1 \cap [1, y]} \;\;\red{\ssum{q_1,q_2 \in {\mathcal Q}_H \\ q_1\ne q_2}}
\left(1 + O\left(\frac{1}{H^{M-2}} + \frac{E'(q_1) + E'(q_2) + E''(q_1,q_2)}{H^2} \right) \right)
$$
where
$$ E'(q) \coloneqq \ssum{h,h' \leq KH\\ h \neq h'} E_{8BK^2H^2}( qh - qh' ; H )$$
and
$$ E''(q_1,q_2) \coloneqq \ssum{h'_1,h'_2 \leq K H \\ h_1 \neq h'_1, h_2 \neq h'_2} E_{8BK^2H^2}( q_1 h'_1 - q_1 h_1 - q_2 h'_2 + q_2 h_2;H ).$$
The average over $E'(q_1)+E'(q_2)$ is acceptably small by the $j=1$ analysis.  Thus (using \eqref{swq}) it suffices to show that
$$ \sum_{q_1,q_2 \in {\mathcal Q}_H} E_{8BK^2H^2}( q_1 h'_1 - q_1 h_1 - q_2 h'_2 + q_2 h_2 ;H)
\ll \frac{1}{H^{M-2}} |{\mathcal Q}_H|^2$$
for each $h'_1,h'_2 \leq KH$ with $h'_1 \neq h_1$, $h'_2 \neq h_2$).
But this follows from \eqref{eq-bound} (applied with $q$ replaced by $q_1$ and $k$ replaced by $-q_2 h'_2 + q_2 h_2$, and then summing in $q_2$). This completes the proof of the $j=2$ case, and so establishes \eqref{da}. 
\end{proof}
We have now verified all the the claims \eqref{ifirst}-\eqref{da}, and so have completed  the proof of Theorem \ref{third}.
\appendix

\section{Proof of the covering lemma}

In this appendix we prove Lemma \ref{hcl}.  Our main tool will be the following general hypergraph covering lemma from \cite[Theorem 3]{FGKMT}:

\newtheorem*{fgkmt-maintheorem}{Theorem A}
\begin{fgkmt-maintheorem}[Probabilistic covering]\label{packing-quant}  There exists
  an absolute constant $C_4 \geq 1$ such that the following holds.  Let $D, r, A
  \geq 1$, $0 < \kappa \leq 1/2$, and let $m \geq 0$ be an integer.
  Let $\tau > 0$ satisfy 
\begin{equation}\label{delta-small}
\tau \leq \left( \frac{\kappa^A}{C_4 \exp(AD)} \right)^{10^{m+2}}.
\end{equation}
Let $I_1,\dots,I_m$ be disjoint finite non-empty sets, and let $V$ be a finite set.  For each $1 \leq j \leq m$ and $i \in I_j$, let $\mathbf{e}_i$ be a random  subset of $V$.  Assume the following:
\begin{itemize}
\item (Edges not too large) Almost surely for all $j=1,\dots,m$ and $i \in I_j$, we have
\begin{equation}\label{edge-bound}
 \# \mathbf{e}_i \leq r;
\end{equation}
\item (Each sieve step is sparse) For all $j=1,\dots,m$, $i \in I_j$ and $v \in V$,
\begin{equation}\label{v-sparse}
\PR( v \in \mathbf{e}_i ) \leq \frac{\tau}{|I_j|^{1/2}};
\end{equation}
\item (Very small codegrees) For every $j=1,\dots,m$, and distinct $v_1,v_2 \in V$,
\begin{equation}\label{codegree-a}
\sum_{i \in I_j} \PR( v_1,v_2 \in \mathbf{e}_i ) \leq \tau
\end{equation}
\item (Degree bound) If for every $v \in V$ and $j =1,\dots,m$ we introduce the normalized degrees
\begin{equation}\label{epsqj-quant}
 d_{I_j}(v) := \sum_{i \in I_j} \PR( v \in \mathbf{e}_i )
\end{equation}
and then recursively define the quantities $P_j(v)$ for $j=0,\dots,m$ and $v \in V$ by setting
\begin{equation}\label{p0-def}
P_0(v) := 1
\end{equation}
and
\begin{equation}\label{pj-def}
P_{j+1}(v) := P_j(v) \exp( - d_{I_{j+1}}(v) / P_j(v) )
\end{equation}
for $j=0,\dots,m-1$ and $v \in V$, then we have
\[
d_{I_j}(v) \leq D P_{j-1}(v) \qquad (1 \le j\le m, v\in V)
\]
 and
\[
P_j(v) \geq \kappa \qquad (0 \le j\le m, v\in V).
\]
\end{itemize}
Then there are random variables $\mathbf{e}'_i$ for each $i \in \bigcup_{j=1}^m I_j$ with the following properties:
\begin{itemize}
\item[(a)]  For each $i \in \bigcup_{j=1}^m I_j$, the support of $\mathbf{e}'_i$ is contained in the support of $\mathbf{e}_i$, union the empty set singleton $\{\emptyset\}$.  In other words, almost surely $\mathbf{e}'_i$ is either empty, or is a set that $\mathbf{e}_i$ also attains with positive probability.
\item[(b)]  For any $0 \leq J \leq m$ and any finite subset $e$ of $V$ with $\# e \leq A - 2rJ$, one has
\[
 \PR\left( e \subset V \backslash \bigcup_{j=1}^J \bigcup_{i \in I_j} \mathbf{e}'_i \right)
 = \(1 + O( \tau^{1/10^{J+1}} )\) P_J(e)
\]
where
\[
P_j(e) := \prod_{v \in e} P_j(v).
\]

\end{itemize}
\end{fgkmt-maintheorem}

\begin{proof} See \cite[Theorem 3]{FGKMT}.
\end{proof}

To derive Lemma \ref{hcl} from Theorem \ref{packing-quant}, we repeat the proof of \cite[Corollary 4]{FGKMT} with a different choice of parameters.  Let the notation and hypotheses be as in Lemma \ref{hcl}. 
Firstly, we may assume that $\eta \le \frac{1}{1000}$, 
for the conclusion is trivial otherwise.

Let $\beta=\beta(\delta)$ be a parameter satisfying
\be\label{beta}
\beta > 10^{2\delta} > \frac{\beta \log \beta}{\beta-1}
\ee
This is possible as $\log \beta < \beta-1$ for all $\beta>1$.
Let
\be\label{mdef}
m = \cl{\frac{\log (1/\eta)}{\log \beta}}
\ee
so that, by \eqref{c2-bound},
\be\label{m-bounds}
1\le m\le \frac{\delta \log\log y+\log\log\log y}{\log \beta}+1, \qquad
\frac{1}{\eta} \le \beta^m \le \frac{\beta}{\eta}.
\ee
 By \eqref{c2-bound} and \eqref{beta},
  $C_2 > \frac{\beta \log \beta}{\beta-1}$
 and thus we may find disjoint intervals $\mathscr{I}_1,\dots,\mathscr{I}_m$ in $[0,1]$ with length
\begin{equation}\label{ij}
|\mathscr{I}_j| = \frac{\beta^{1-j}\log \beta}{C_2}
\qquad (1\le j\le m).
\end{equation}
Let $\vec{\mathbf{t}} = (\mathbf{t}_1,\ldots,\mathbf{t}_s)$,
where $\mathbf{t}_i$ is a uniform random real number in $[0,1]$ for each $i$, and such that $\mathbf{t}_1,\dots,\mathbf{t}_s$ are independent.  Define the random sets
$$ I_j = I_j( \vec{\mathbf{t}} ) := \{ 1\le i\le s: \mathbf{t}_i \in \mathscr{I}_j \}$$
for $j=1,\dots,m$.  These sets are clearly disjoint.

We will verify (for a suitable choice of $\vec{\mathbf{t}}$) the hypotheses of Theorem \ref{packing-quant} with the indicated sets $I_j$ and random variables $\mathbf{e}_i$, and with suitable choices of parameters $D, r, A \geq 1$ and $0 < \kappa \leq 1/2$. 

Let $v \in V$, $1\le j\le m$ and consider the independent random
variables $(\mathbf{X}_i^{(v,j)}(\vec{\mathbf{t}}))_{1\le i\le s}$, where
$$
\mathbf{X}_i^{(v,j)}(\vec{\mathbf{t}})=\begin{cases}
\PR( v \in \mathbf{e}_i) & \text{ if } i\in I_j(\vec{\mathbf{t}}) \\
0 & \text{ otherwise.}
\end{cases}
$$
By \eqref{uniform}, \eqref{ij}, and \eqref{m-bounds}, we have for every $1 \leq j \leq m$ and $v \in V$ that
\begin{align*}
\sum_{i=1}^s \E \mathbf{X}_i^{(v,j)}(\vec{\mathbf{t}}) &=
\sum_{i=1}^s \PR( v \in \mathbf{e}_i) \PR(i\in I_j(\vec{\mathbf{t}}))
\\ &= |\mathscr{I}_j| \sum_{i=1}^s
\PR( v \in \mathbf{e}_i) \\
&= \beta^{1-j}\log \beta + O \big( \eta \beta^{-j}\log \beta  \big)\\
&= \beta^{1-j}\log \beta +O \big(  \beta^{-m-j}\log\beta \big).
\end{align*}
In the last equality we have used that $C_2 \ge 1$.

By \eqref{sparsity}, we have
$|\mathbf{X}_i^{(v,j)}(\vec{\mathbf{t}})| \le y^{-1/2-1/100}$ for all $i$, and hence by
Hoeffding's inequality,
\begin{align*}
\PR\( \Big| \sum_{i=1}^s (\mathbf{X}_i^{(v,j)}(\vec{\mathbf{t}})-\E \mathbf{X}_i^{(v,j)}(\vec{\mathbf{t}})) \Big| \ge
\frac{1}{y^{1/200}} \) &\le 2 \exp \left\{ - 2\frac{y^{-1/100}}{y^{-1-1/50} s} \right\}  \\
&= 2 \exp \left\{ - 2y^{1/100} \right\}.
\end{align*}
Here we used the hypothesis $s\le y$.
By a union bound, the bound $|V|\le y$ and \eqref{mdef}, there is a deterministic
choice $\vec{t}$ of $\vec{\mathbf{t}}$ (and hence $I_1,\dots,I_m$)
such that for \emph{every} $v \in V$ and \emph{every}
$j=1,\ldots,m$, we have
\[
\Big| \sum_{i=1}^s (\mathbf{X}_i^{(v,j)}(\vec{t})-\E \mathbf{X}_i^{(v,j)}(\vec{\mathbf{t}})) \Big| < \frac{1}{y^{1/200}}.
\]
Note that this is vastly smaller than $\beta^{-m}\order (\log y)^{-\delta}$.
We fix this choice $\vec{t}$ (so that the $I_j$ are now deterministic),
and we conclude that for $y$ sufficiently large (in terms of $\delta$)
\begin{equation}\label{oo}
\begin{split}
\sum_{i \in I_j} \PR( v \in \mathbf{e}_i) &=\sum_{i=1}^s \mathbf{X}_i^{(v,j)}(\vec{t}) \\
&=  \beta^{1-j}\log \beta + O\( \beta^{-j-m}\log\beta +\frac{1}{y^{1/200}}\)
\\ &=  \beta^{1-j}\log \beta + O\( \beta^{-j-m}\log\beta \)
\end{split}
\end{equation}
uniformly for all $j=1,\dots,m$, and all $v \in V$.
In particular, all sets $I_j$ are nonempty.

Set
\begin{equation}\label{deltaa}
 \tau \coloneqq y^{-1/100}
\end{equation}
and observe from \eqref{sparsity} and the bound $|I_j| \leq s \leq y$ that the sparsity condition \eqref{v-sparse} holds.  Also, the small
codegree condition \eqref{codegree} implies the small codegree condition
\eqref{codegree-a}.

From \eqref{epsqj-quant}, \eqref{oo} and \eqref{m-bounds}, we now have
$$ d_{I_j}(v) = (1 + O(\beta^{-m})) \beta^{-j+1} \log \beta$$
for all $v \in V$, $1\le j\le m$.
Let $\lambda$ satisfy $1+\log\beta < \lambda < \beta$.
A routine induction using \eqref{p0-def}, \eqref{pj-def} then shows (for $y$ sufficiently large) that
\begin{equation}\label{moldy}
 P_j(v) = (1 + O( \lambda^j \beta^{-m} ) ) \beta^{-j} \quad (0\le j\le m),
\end{equation}
In particular we have
$$ d_{I_j}(v) \leq D P_{j-1}(v)\qquad (1\le j\le m)$$
for some absolute constant $D$, and
$$ P_j(v) \geq \kappa \qquad (0\le j\le m),$$
where
$$ \kappa \gg \beta^{-m} \ge \eta/\beta \gg \eta.$$
We now set
$$ \red{r= \frac{K(\log y)^{1/2}}{\log\log y}}, \qquad  A := 2rm+1.$$
By \eqref{m-bounds} and \eqref{size-bound}, one has
$$ A \ll (\log y)^{1/2}$$
 and so \eqref{edge-bound} holds and also
\be\label{KACD}
 \frac{\kappa^A}{C_4 \exp(AD)} \gg \exp\( - O\( (\log y)^{1/2} (\log\log y) \) \). 
\ee
By \eqref{mdef} and \eqref{beta}, 
\[
10^m \ll (1/\eta)^{\frac{\log 10}{\log \beta}} \ll
(\log y)^{\frac{\delta \log 10}{\log \beta}} (\log\log y)^{\frac{\log 10}{\log \beta}} <
(\log y)^{1/2-\eps_1}
\]
for some $\eps_1=\eps_1(\delta)>0$.
Hence by \eqref{deltaa},  we see that
\be\label{deltam10m}
\tau^{1/10^{m+2}} \le \exp \Bigg\{ - K(\log y)^{1/2+\eps_1} \Bigg\},
\ee
for some absolute constant $K>0$.
Combining \eqref{KACD} and \eqref{deltam10m}, we see that  \eqref{delta-small} is satisfied if $y$ is large enough. Thus all the hypotheses of Theorem \ref{packing-quant} have been verified for this choice of parameters. Applying this Theorem \ref{packing-quant} and using \eqref{moldy}, one thus obtains random variables $\mathbf{e}'_i$ for $i \in \bigcup_{j=1}^m I_j$ whose range is contained in the range of $\mathbf{e}_i$ together with $\emptyset$, such that
\[
 \PR\left( n \not \in \bigcup_{j=1}^m \bigcup_{i \in I_j} \mathbf{e}'_i \right) \ll \beta^{-m} \ll \eta
\]
for all $n \in V$. 
For $1\le i\le s$, $i\not \in \bigcup_{j=1}^m I_j$, set 
$\mathbf{e}_i' = \emptyset$ with probability 1.
 By linearity of expectation this gives 
\[
\E \Big|V \setminus \bigcup_{i=1}^s \mathbf{e}_i'  \Big|
\ll \eta |V|.
\]
Hence, for some absolute constant $C_3>0$, we have
\[
\Big|V \setminus \bigcup_{i=1}^s \mathbf{e}_i'  \Big|
\le C_3 \eta |V|
\]
with probability $\ge 1/2$.
Therefore, there is some vector $(e_1,\ldots,e_s)$ of
subsets of $V$, where, for every $i$, $e_i$ is in the support of  $\mathbf{e}_i$ or is the empty set, for which
\eqref{vbound} holds.  Finally, for the $i$ such that
$e_i$ is the empty set, replace $e_i$ with an arbitrary
element in the support of $\mathbf{e}_i$; clearly 
\eqref{vbound} still holds.


\appendix

\section{Corrigendum: changes made from the published version}

This document incorporates a number of corrections to the published version
of the paper, JEMS {\bf 23} (2021), 667--700.
The authors are grateful to Mikhail Gabdullin for pointing these out to us.

The only error which affect the results of the paper are
are errors in the exponents of $H$ in the deduction of Theorem 2
from Theorem 3.
When corrected, these force the parameter $M$ to be somewhat larger than claimed, namely  $M>6$.  This affects the numerical estimates 
for the exponents of $\log\log x$ in Theorem 1 and corollaries.

 Below we enumerate the specific corrections to
 the published version, which are all incorporated in the present document.
 The page number(s) in parentheses refer to the published version.

\begin{enumerate}

\item (p. 669) In Theorem 1, the definition of $C(\rho)$, the factor $4+\delta$ corrected to 6.  Likewise, the corrected lower bound is $C(\rho)>e^{-1-6/\rho}$.
Corrected (2.3) and the following display accordingly.  The corrected asymptotic, five lines after (2.3), is $C(\rho)\sim \frac12 e^{-6/\rho}$ as $\rho\to 0^+$.

\item (p. 669) In Example 1, the corrected bound is $C(1) > 1/835$.

\item (p. 670) In Corollary 1, the corrected lower bound is $C(1/d) > e^{-(6d+1)}$.

\item (p. 671) In (1.7) and Corollary 2, the corrected bound is $C(1/2) > 1/325565$.

\item (p. 675) Six lines after (2.3), we state that $M$ is a fixed number slightly larger than 6.

\item (p. 678): In (2.10), we write $6 <M \le 7$.  Three lines before Remark 9, we write ``$M$ sufficiently close to 6''.

\item (p. 680): The hypotheses of Theorem 2 are adjusted slightly.  With $\delta$ fixed
satisfying (2.3), $M$ is taken sufficiently close to 6, $\xi$ sufficiently close
to $1$, $K$ sufficiently large (all depending on $\delta$) and $\eps$ satisfying $M < 6+6\eps$, with $x$ 
sufficiently large in terms of all of these parameters.

\item (p. 682) in the third to last display in section 3, a missing factor of $K$ is added all terms, and it now reads
\[
|\mathbf{e}_q| \le K H_q \le \frac{Ky}{z} = \frac{K(\log x)^{1/2}}{\log\log x} \le \frac{K(\log y)^{1/2}}{\log\log y}.
\] 
Consequently, we add a factor $K$ to the right side of (3.5), stipulate in
Lemma 3.1 that $y \ge y_0(\delta,K)$ with $y_0(\delta,K)$ sufficiently large,
and add a factor of $K$ to the definition of $r$, four lines before (A.15).

\item (p. 683) In the final two-line display of section 3, we correct the conclusion
to \[
C_2 + O((\log x)^{-\delta(1+\eps)}).
\]

\item (p. 683) In (4.4), the factor $KH$ on the right side is corrected to $\fl{KH}$, since
$KH$ need not be an integer.  This induces other changes: we change $|\mathcal{Q}_H|KH$ to $|\mathcal{Q}_H|\cdot \fl{KH}$ twice in the display preceding (4.10), twice in (4.10), on the right side of (4.11), and in the fourth display following (4.11).  We change the definition of $C_2$ (toward the end of section 4) to 
\[
C_2 = \frac{1}{(K+1)y} \sum_{H\in \mathfrak{H}} \frac{|\mathcal{Q}_H|\cdot \fl{KH}}{\sigma_2}.
\]
We correct $KH$ to $\fl{KH}$ on the right side of the display before (5.9),
and correct $K^2H^2$ to $\fl{KH}^2$ on the right side of the display before (5.11).

\item (p. 685) In (4.10), the denominator on the right side is corrected to $\sigma_2 H^{1+\eps}$. The four lines following (4.10) are corrected as follows: 
``then, recalling that $M>6$ and $\eps$ is very small,
\[
\E |\mathcal{E}_H| \ll \frac{\sigma y}{H^{1+2\eps}}.
\]
By Markov's inequality, we conclude that $|\mathcal{E}_H| \le \sigma y/H^{1+\eps}$ with probability $1-O(H^{-\eps})$.''
\smallskip

\item (p. 685) In three places in between (4.10) and (4.11), the summation $\ds \sum_{n=1}^y$ is corrected to $\ds \sum_{-Ky < n-hq \le y}$.

\item (p. 685--86) We correct the denominator on the right side of (4.11) to $H^{1+\eps}\sigma_2$.
The following lines are then corrected as:
``Then
\[
\mathbb{E} |\mathcal{E}_H'| \ll \frac{y H^{1+\eps} \sigma_2}{H^{M-4-2\eps}} \ll \sigma y \frac{\log H}{H^{M-5-3\eps}}.
\]
By Markov's inequality, $|\mathcal{E}_H'| \le \sigma y/H^{1+\eps}$ with
probability $1-O(1/H^{M-6-5\eps})$.
By (2.10) again, if
$\eps$ is small enough then $M-6-5\eps>\eps$.
Consider the event that (4.5) holds,
and that for every $H$, we have
(4.9),  $|\mathcal{E}_H| \le \sigma y/H^{1+\eps}$ and
 $|\mathcal{E}_H'| \le \sigma y/H^{1+\eps}$.''
 
\medskip

\item (p. 686) in the fourth display after (4.11),  the big-$O$ term is corrected to $\ds O\bigg(\frac{1}{H^{1+\eps}}\bigg)$.

\item (p. 688)  line -7.  In the proof of Lemma 5.1, the inequality $k\le 10H$ is corrected to $k\le 10KH$.

\item (p. 690) In the display prior to (5.4), we added a missing error term. The line now reads
\[
\mathbb{P}( n_1,n_2 \in \mathbf{S}_2 ) = \left( 1 + O(H^{-M})+ O\left(E_{8B}(n_1-n_2;H)\right)\right) \sigma_2^2.
\]
Consequently, we added a factor $(1+O(1/\log y))$ at the beginning of the second line
of (5.4).

\item (p. 691) Two lines after (5.5), the relation $d\in \mathcal{D}_{H^+}$
is corrected to $d\in \mathcal{D}_{H} \setminus \{1\}$.

\item (p. 691) We added a missing factor $P_1^2$ to the denominator in the final fraction in (5.6).

\item (p. 691--93)  In several places, we wrote that variables are $\ge -Ky$ and it is corrected to $>-Ky$.  This correction is made four lines after the statement of Theorem 3 and in seven places in section 5.

\item (p. 693) In the definition of $\mathcal{V}$, midway between (5.8) and (5.9), 
we corrected $1\l H\le KH$ to $1\le h \le KH$.

\item (p. 693) in the proof of the $j=2$ case of Theorem 3 (ii),
the argument as written works unless $n_1\equiv n_2 \pmod{q}$.
To take this case into account, replace the two lines following the 
definition of $\mathcal{V}$ with the following:
``so that $|\mathcal{V}|=\ell \ge \fl{KH}$.
When $n_1\not\equiv n_2\pmod{q}$, $\mathbf{AP}(KH;q,n_1)$ and
$\mathbf{AP}(KH;q,n_2)$ are disjoint.  There are $O(y^2/q)=O(yH)$ 
pairs $(n_1,n_2)$ with $n_1\equiv n_2 \pmod{q}$, and for each such pair,
$|\mathbf{AP}(KH;q,n_1)|+|\mathbf{AP}(KH;q,n_2)| \le |\mathcal{U}|+KH$.
We also have $\sigma_2^{-KH} \ll y^{o(1)}$.
Noting that $\mathbf{S}_2$ is independent of both $\mathbf{AP}(KH;q,n_1)$ and $\mathbf{AP}(KH;q,n_2)$, we see that the previous expectation
is $O(y^{1+o(1)}H|\mathcal{Q}_H|)$ plus''

\medskip

\item (p. 694--95) In the proof of Theorem 3 (iii), $j=2$ case, 
the case $q_1=q_2$ requires special analysis, which has now been included.
By (2.8) these terms contribute $\ll H^2 y |\mathcal{Q}_H| \sigma_2^{-2KH} 
\ll |\mathcal{Q}_H|^2 y^{o(1)}$, which is negligible.
Also, ``we may write (5.11) as'' is changed to
``we may write the sum of (5.11) over $h_1,h_2$ as'', and we added 
a factor $\fl{KH}^2$ to the following display.
The reason for this change is that $E'(q_1)$, $E'(q_2)$
and $E''(q_1,q_2)$ already incorporate sums over $h_1,h_2$.

\end{enumerate}

\end{document}